\documentclass[reqno,11pt]{amsart}
\usepackage[T1]{fontenc}
\usepackage{graphicx}
\usepackage{mathrsfs}
\usepackage{amsmath,amsthm,amssymb,amscd}
\usepackage{latexsym}
\usepackage{color}
\usepackage[colorlinks=true]{hyperref}
\hypersetup{urlcolor=blue,citecolor=blue,linkcolor=blue}

\newtheorem{theorem}{Theorem}[section]
\newtheorem{corollary}[theorem]{Corollary}
\newtheorem{lemma}[theorem]{Lemma}

\newtheorem{remark}[theorem]{Remark}
\newtheorem{example}[theorem]{Example}

\theoremstyle{remark}
\numberwithin{equation}{section}

\newcommand{\rank}{\operatorname{rank}}
\newcommand{\diag}{\operatorname{diag}}
\renewcommand{\span}{\operatorname{span}}

\newcommand{\Z}{\mathbb Z}
\newcommand{\C}{\mathbb C}
\newcommand{\F}{\mathbb F}
\newcommand{\K}{\mathbb K}
\newcommand{\M}{\mathrm M}

\newcommand{\GL}{\mathrm{GL}}

\newcommand{\mtx}[1]{\begin{pmatrix}#1\end{pmatrix}}
\newcommand{\wt}[1]{\widetilde{#1}}

\newcommand{\col}{\operatorname{col}}
\newcommand{\row}{\operatorname{row}}
\newcommand{\Alt}{\operatorname{Alt}}
\newcommand{\1}{\mathbf 1}

\DeclareMathOperator{\Ima}{Im}
\begin{document}
	
	\title{Linear Rank Nonincreasing and Preserving Maps between Matrix Spaces}
	
	\author[H. Huang] {Huajun Huang}
	\address{Department of Mathematics and Statistics, Auburn University,
		AL 36849--5310, USA}
	\email{huanghu@auburn.edu}

	\author[M. Tsai] {Ming-Cheng Tsai}
	\address{General Education Center,
		Taipei University of Technology, Taipei 10608, Taiwan}
	\email{mctsai2@mail.ntut.edu.tw}

	\begin{abstract}
		We classify linear maps $\phi$ between possibly different rectangular
		matrix spaces over a field $\mathbb F$ with $|\mathbb F|\ge5$
		such that $\operatorname{rank} \phi(A)\le2$ whenever $\operatorname{rank} A\le2$.
		Every such map has one of six forms.
		If its image contains a matrix of rank greater than two, then
		$\phi(A)=MAN$ or $\phi(A)=MA^{\top}N$ for fixed matrices $M,N$.
		The remaining forms have images consisting entirely of matrices
		of rank at most two, including an exceptional form arising from
		$3\times3$ alternating matrices.
		As a consequence, we characterize linear maps preserving rank
		exactly two.
		We also prove that rank-$k$ preservation implies rank-$k$
		nonincrease when $|\mathbb F|>k+2$, and that rank-$k$
		nonincrease implies rank-$\ell$ nonincrease for every admissible
		$\ell\ge k$ when $|\mathbb F|>k+1$.
		The classification uses the fact that the Chow rank of the
		generic $3\times3$ determinant is five, connecting the rank
		preserver problem with product decompositions of polynomials.
	\end{abstract}
	
	\subjclass[2020]{Primary 15A86;
		Secondary 15A03, 15A04, 14M12} 
	
	\keywords{Linear preserver problems, rank-two nonincreasing maps,
		rank-two preservers, determinantal varieties, Chow rank}
	
	\maketitle
	
	\section{Introduction}
	
	Linear preserver problems concern linear maps that preserve specified sets, relations, or invariants, such as the determinant, spectrum, trace, and invertibility; see \cite{Li01,Li92,Pierce,ZTC2007}. Rank conditions occupy a central place in this theory. The rank filtration of a matrix space is described by determinantal varieties: the matrices of rank at most $k$ are cut out by the $(k+1)\times(k+1)$ minors. This leads to two related questions for linear maps: which maps preserve a specified rank, and which carry matrices of rank at most $k$ into matrices of rank at most $k$? 
	
	Classical rank-preserver results concern operators on a fixed matrix space. Marcus and Moyls \cite{Ma59,MaMo59} characterized rank-preserving maps on $\M_{m,n}(\F)$ under suitable assumptions on the field. These maps have the forms
	\[
	A\longmapsto MAN,
	\qquad
	A\longmapsto MA^{\top}N,
	\]
	where the transpose form requires compatible dimensions. Later work addressed preservation of a specified rank; see \cite{We67,Mi77,Beasley88,Beasley90,Beasley01}.
	
	Rank preservation at a single level is different from nonincrease on all matrices below that level. Lim \cite{Lim} studied rank-one nonincrease on a fixed matrix space, and Loewy \cite{Lo93,Lo93II, Loewy} considered higher ranks. We allow the source and target to have different dimensions and study linear maps
	\[
	\phi:\M_{m,n}(\F)\longrightarrow\M_{p,q}(\F).
	\]
	Botta \cite{Bo87} classified rank-one nonincreasing maps in this setting, and Li, Rodman, and \v{S}emrl \cite{CKLi} characterized rank-one preservers. The present paper addresses rank two and the relations among rank conditions at different levels.
	
	Let \(\Z_+\) be the set of positive integers, let \([n]=\{1,\ldots,n\}\), and let \(\F\) be a field with \(|\F|\ge5\), unless a different field assumption is stated explicitly. No restriction on the characteristic is imposed. Write \(\M_{m,n}(\F)\) for the space of \(m\times n\) matrices over \(\F\), and write \(\M_n(\F)=\M_{n,n}(\F)\).
	
	For a set \(S\) of matrices, let \(S^{\mathrm{rk}=k}\) and \(S^{\mathrm{rk}\le k}\) denote the matrices in \(S\) of rank \(k\) and of rank at most \(k\), respectively. A linear map \(\phi:V\to W\) is a \emph{rank-\(k\) preserver} if
	\begin{equation}\label{rank k preserver}
		\phi(V^{\mathrm{rk}=k})\subseteq W^{\mathrm{rk}=k}.
	\end{equation}
	It is \emph{rank-\(k\) nonincreasing} if
	\begin{equation}\label{rank k decreasing}
		\phi(V^{\mathrm{rk}\le k})\subseteq W^{\mathrm{rk}\le k}.
	\end{equation}
	A map is rank preserving, or rank nonincreasing, if it has the stated property for every possible rank.

	We establish three main results. Theorem~\ref{thm: preserving to decreasing} shows that a rank-$k$ preserver is rank-$k$ nonincreasing when $|\F|>k+2$. Theorem~\ref{thm: rank nonincreasing lower to higher} shows that rank-$k$ nonincrease implies rank-$\ell$ nonincrease for every admissible $\ell\ge k$ when $|\F|>k+1$. In particular, both conclusions hold over every infinite field, in arbitrary characteristic. These results describe how a condition at one rank constrains the map at other ranks. Examples in Section~4 show that a map preserving a given rank need not preserve any other positive rank, and that a map that is
	rank-nonincreasing on matrices of a given rank need not have this
	property on matrices of a lower rank. 
	
	The principal result, Theorem~\ref{thm: M_mn rank 2 nonincreasing}, gives six possible forms for every rank-two nonincreasing map $\phi$ between matrix spaces when $|\F|\ge 5$. These six forms establish a sharp structural dichotomy: either $\phi$ is a standard multiplication or transpose-multiplication map $\phi(A)=MAN$ or $\phi(A)=MA^\top N$ (and hence is rank-$k$ nonincreasing for every $k\ge1$ and  $\Ima\phi$ contains a matrix of rank at least three), or the entire image $\Ima\phi$ has upper rank at most two. In the latter case, $\phi$ falls into one of four classical upper-rank-two structures \cite{AtkinsonLloyd1981,Atkinson1983}: its outputs share a common row or column space of dimension at most two, are sums of two rank-one terms, or lie in a space equivalent to an embedded $3\times3$ alternating matrix space.  Corollary~\ref{thm: Mmn rank 2 preserver} identifies the additional conditions under which the listed forms preserve rank two. 
	Examples in Section 4 indicates that none of the six forms can be fully covered by the other five.

	A notable feature of the proof is its use of the Chow rank, also called the product rank, of a polynomial. For a homogeneous cubic $f$, this is the least integer $r$ for which $f$ is a sum of $r$ products of three linear forms. The generic $3\times3$ determinant, and hence any generic $3\times3$ minor, has Chow rank five over every field: the lower bound follows from
	Ilten and Teitler \cite{IltenTeitler2016} and 
	Ilten and S\"u\ss\ \cite[Example~4.2]{IltenSuess2020}, and a characteristic-free five-term decomposition follows from Krishna and Makam \cite[Section~2]{KrishnaMakam2021}. Thus such a minor cannot be expressed as a sum of four products of linear forms. In Lemma~\ref{lem:cubic-obstruction}, a cubic polynomial forced by the rank condition has a four-term product decomposition. Its membership in the ideal generated by the $3\times3$ minors then forces it to vanish identically. This obstruction connects the linear preserver problem to determinantal ideals, the geometry of Fano schemes, and tensor decomposition. 

	The results leave two questions. One is to determine the rank-two
	classification over the remaining fields $\F_2$, $\F_3$, $\F_4$. Strengthening Theorem 4.1 to $|\F| \ge 4$ is an easy upgrade. Another, suggested by the proof of our classification theorem,
	is to investigate whether lower bounds for the Chow rank of the generic
	$(k+1)\times(k+1)$ determinant can provide analogous obstructions in
	the classification of rank-$k$ nonincreasing linear maps for $k\ge3$.
	
	Section~2 proves the implications among rank conditions. Section~3 recalls the rank-one classifications. Section~4 states the rank-two classification and its preserver corollary, gives examples distinguishing rank conditions, and then presents the auxiliary lemmas and proofs.
	
	\section{Relations among rank nonincreasing and preserving maps}\label{sec:rank-relations}

	We establish two relations among rank conditions under explicit field-size assumptions. Preservation at rank $k$ implies nonincrease at rank $k$, and nonincrease at rank $k$ extends to every admissible higher rank. Beasley proved the first implication for operators on a fixed matrix space over an algebraically closed field \cite[Lemma~2.1]{BEASLEY70}. We allow different source and target spaces and give sufficient bounds for finite fields.
	
	We will use the following elementary polynomial fact. If a polynomial over $\F$ has degree at most $d$ in each variable and $|\F|>d$, then vanishing at every point of $\F^N$ implies that it is the zero polynomial. This follows by induction on $N$ from the bound on the number of roots of a nonzero univariate polynomial. Consequently, if a polynomial $h$ of total degree at most $d$ in the entries of a matrix vanishes on all matrices over $\F$ of rank at most $s$, and $|\F|>d$, then it vanishes on such matrices over every extension field. Indeed, substitute
	\[
	X=UV^\top,\qquad U\in\M_{m,s}(\F),\quad V\in\M_{n,s}(\F).
	\]
	The polynomial $h(UV^\top)$ has degree at most $d$ in each scalar entry of $U$ and $V$, so it is identically zero. Every matrix of rank at most $s$ over an extension field admits the same factorization.
	
	\begin{theorem}\label{thm: preserving to decreasing}
		Let $\F$ be a field with $|\F|>k+2$ and let
		\(
		1\le k\le \min\{m,n,p,q\}.
		\)
		Every rank-$k$ preserving linear map
		\(
		\phi:\M_{m,n}(\F)\longrightarrow \M_{p,q}(\F)
		\)
		is rank-$k$ nonincreasing. That is,
		\[
		\rank X=k \implies \rank\phi(X)=k
		\]
		implies
		\[
		\rank X\le k \implies \rank\phi(X)\le k.
		\]
	\end{theorem}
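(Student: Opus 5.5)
We argue by contradiction, using a line through the bad matrix on which the rank-$k$ property holds except at few points. Suppose $\rank X=s<k$ but $\rank\phi(X)\ge k+1$. Choose a matrix $Y$ with $\rank Y=k-s$ such that $X+tY$ has rank exactly $k$ for all $t\ne0$. Concretely, write $X=P\,\diag(I_s,0)\,Q$ with $P,Q$ invertible and take $Y=P\,E\,Q$, where $E$ is the diagonal $0$-$1$ matrix with ones in positions $s+1,\dots,k$. Then $X+tY=P\,\diag(I_s,tI_{k-s},0)\,Q$, which has rank $k$ for every $t\neq0$. By the preserving hypothesis, $\rank\phi(X)+t\phi(Y)=k$ for every $t\in\F^\times$.

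Next we pass to minors. Since $\rank\phi(X)\ge k+1$, pick a $(k+1)\times(k+1)$ submatrix of $\phi(X)$ with nonzero determinant. Consider
\[
g(t)=\det\bigl(\phi(X)+t\phi(Y)\bigr)_{I,J},
\]
where $(I,J)$ indexes the chosen submatrix. Then $g$ is a polynomial in $t$ of degree at most $k+1$, and $g(0)\ne0$. For $t\ne0$ the matrix has rank $k$, so $g(t)=0$ for all $t\in\F^\times$. Thus $g$ has at least $|\F|-1$ roots. Since $|\F|-1>k+1\ge\deg g$ (this is where $|\F|>k+2$ enters), $g\equiv0$, which contradicts $g(0)\neq0$. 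Hence $\rank\phi(X)\le k$.

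The case $X=0$ is trivial by linearity, and it is also covered by the argument above with $s=0$. We also need $k\le\min\{m,n\}$ so that $E$ exists, which the hypothesis guarantees. The target dimensions ensure that rank $k$ is attainable there, so the preserving hypothesis is meaningful.

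The main point to check is that the chosen line meets the rank-$k$ locus at every nonzero $t$, and the explicit diagonal form gives this. Beyond that, the root count supplies the field bound $|\F|>k+2$.
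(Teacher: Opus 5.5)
Your proof is correct and takes essentially the same approach as the paper. Like you, the paper completes $A=P\,\diag(I_r,0)\,Q$ with $B=P\,\diag(0,I_{k-r},0)\,Q$, so that $A+xB$ has rank $k$ for every $x\neq0$; it then observes that each $(k+1)\times(k+1)$ minor of $\phi(A)+x\phi(B)$ is a polynomial of degree at most $k+1$ vanishing at more than $k+1$ points, hence identically zero and in particular at $x=0$ (one notational slip: you mean $\rank\bigl(\phi(X)+t\phi(Y)\bigr)=k$).
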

	
	\begin{proof}
		Suppose, to the contrary, that $\phi$ is rank-$k$ preserving but
		not rank-$k$ nonincreasing. Then there exists
		$A\in\M_{m,n}(\F)$ such that
		\[
		r:=\rank A<k
		\qquad\text{and}\qquad
		\rank\phi(A)>k.
		\]
		By matrix equivalence, we can write 
		\[
		A=P
		\begin{bmatrix}
			I_r&0\\
			0&0
		\end{bmatrix}_{m\times n}
		Q
		\]
		for some nonsingular matrices
		$P\in\M_m(\F)$ and $Q\in\M_n(\F)$. 
		Choose
		\[
		B=P
		\begin{bmatrix}
			0&0&0\\
			0&I_{k-r}&0\\
			0&0&0
		\end{bmatrix}_{m\times n}
		Q,
		\]
		where the rows and columns of the middle matrix are partitioned into blocks of sizes \((r,k-r,m-k)\) and \((r,k-r,n-k)\), respectively.
		Then, for every $x\in\F\setminus\{0\}$,
		\[
		\rank(A+xB)=k.
		\]
		Since $\phi$ is rank-$k$ preserving and linear,
		\[
		\rank\phi(A+xB)=\rank\big(\phi(A)+x\phi(B)\big)=k
		\qquad
		(x\in\F\setminus\{0\}).
		\]
		Every $(k+1)\times(k+1)$ minor of
		$\phi(A)+x\phi(B)$ is a polynomial in $x$ of degree at most $k+1$.
		Since $\rank\phi(A+xB)=k$, each such polynomial vanishes for every
		$x\in\F\setminus\{0\}$. Since $\F$ has more than $k+1$ nonzero elements, it must
		vanish identically. Evaluating at $x=0$ shows that every
		$(k+1)\times(k+1)$ minor of $\phi(A)$ is zero. Hence
		\[
		\rank\phi(A)\le k,
		\]
		contrary to the choice of $A$. Therefore $\phi$ is rank-$k$
		nonincreasing.
	\end{proof}
	
	Conceptually, over an infinite field, the rank-\(k\) matrices are Zariski dense in the determinantal variety \(\{X:\operatorname{rank}X\le k\}\). Thus the vanishing of all \((k+1)\)-minors of \(\phi(X)\) on rank-\(k\) matrices propagates automatically to all matrices of rank at most \(k\).
	
	The converse of Theorem \ref{thm: preserving to decreasing} does not hold. For example, Botta's classification of rank-one nonincreasing linear maps \cite{Bo87} (see Theorem \ref{thm: rank decreasing one gen}) shows that a rank-one nonincreasing linear map need not be a rank-one preserver.

	Loewy \cite{Loewy} announced without proof that every rank-$k$ nonincreasing linear operator $\phi: \M_{m,n}(\F)\to \M_{m,n}(\F)$ over an infinite field $\F$ is also 
	rank-$\ell$ nonincreasing for every $k\le \ell\le \min\{m,n\}.$  
	We extend this result to maps $\M_{m,n}(\F)\to \M_{p,q}(\F)$ and provide a proof here.
	
	\begin{lemma}[No rank-jump lemma]\label{lem:singular-input-bound}
		Let $r\ge2$, let $\F$ be a field with $|\F|>r$, and let $\Psi:\M_r(\F)\to\M_{p,q}(\F)$ be linear. If
		\[
		\rank X<r\quad\Longrightarrow\quad\rank\Psi(X) <r,
		\]
		then $\rank\Psi(X)\le r$ for every $X\in\M_r(\F)$.
	\end{lemma}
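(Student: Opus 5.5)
The plan is to translate the hypothesis into a divisibility statement about polynomials, and then use compound matrices to bound the rank.

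\emph{Step 1: the $r\times r$ minors of $\Psi(X)$ vanish on all singular matrices, over every extension field.} Fix row and column index sets $I\subseteq[p]$, $J\subseteq[q]$ with $|I|=|J|=r$. Let $g_{IJ}(X)$ be the corresponding $r\times r$ minor of $\Psi(X)$. Since $\Psi$ is linear, $g_{IJ}$ is a homogeneous polynomial of degree $r$ in the entries of $X$ (or is zero). By hypothesis $g_{IJ}$ vanishes on every $X\in\M_r(\F)$ with $\rank X\le r-1$. I will apply the polynomial fact recalled before Theorem~\ref{thm: preserving to decreasing}, with $s=r-1$. Substitute $X=UV^\top$ with $U,V\in\M_{r,r-1}$. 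Then $g_{IJ}(UV^\top)$ has degree at most $r$ in each entry of $U$ and $V$. Because $|\F|>r$, it is the zero polynomial. Hence $g_{IJ}$ vanishes on all singular matrices over the algebraic closure $\overline{\F}$.

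\emph{Step 2: divisibility by the determinant.} The generic determinant $\det X$ is an irreducible polynomial. By Step~1, $g_{IJ}$ vanishes on its zero set over $\overline{\F}$. By the Nullstellensatz, $g_{IJ}$ lies in the radical of $(\det X)$, and this radical is $(\det X)$ itself. So $\det X$ divides $g_{IJ}$. Comparing degrees gives
\[
g_{IJ}(X)=c_{IJ}\det X
\]
for a constant $c_{IJ}$. Since $g_{IJ}$ and $\det X$ have coefficients in $\F$, we have $c_{IJ}\in\F$. I expect this to be the main technical point. It rests on the irreducibility of the generic determinant and on the transfer from $\F$ to $\overline{\F}$ in Step~1, which is exactly where $|\F|>r$ is used.

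\emph{Step 3: compound matrices.} Let $C_r(Y)$ denote the $r$-th compound of $Y$, that is, the $\binom{p}{r}\times\binom{q}{r}$ matrix of all $r\times r$ minors of $Y$. Step~2 says
\[
C_r(\Psi(X))=\det(X)\,C,
\]
where $C=(c_{IJ})$ is a fixed matrix. In particular $\rank C_r(\Psi(X))\le1$ for every $X$.

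On the other hand, if $\rank Y=s\ge r$, then $\rank C_r(Y)=\binom{s}{r}$. To see this, write $Y=P\begin{bmatrix}I_s&0\\0&0\end{bmatrix}Q$ and use multiplicativity of compounds. If some $X$ had $\rank\Psi(X)\ge r+1$, then $\rank C_r(\Psi(X))\ge\binom{r+1}{r}=r+1\ge 3$, which is a contradiction. Hence $\rank\Psi(X)\le r$ for all $X$.

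As a consistency check, when $X$ is singular the identity in Step~3 gives $C_r(\Psi(X))=0$, so $\rank\Psi(X)<r$, which agrees with the hypothesis.
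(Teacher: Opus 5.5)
Your Steps 1 and 2 are correct and coincide with the first half of the paper's proof. The transfer to $\overline{\F}$ uses $|\F|>r$ exactly as there, and the primality of $(\det X)$ gives the divisibility. Your degree comparison $g_{IJ}=c_{IJ}\det X$ is a valid sharpening.

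\textbf{The gap is in Step 3.} You write ``In particular $\rank C_r(\Psi(X))\le1$.'' The identity $C_r(\Psi(X))=\det(X)\,C$ only says that all these compounds lie on one line in matrix space. For nonsingular $X$, the matrix $\det(X)\,C$ has exactly the rank of $C$, and nothing you have proved bounds $\rank C$.

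The case $r=1$ shows the inference is actually wrong, not just unproved. There Steps 1 and 2 go through verbatim. Yet $\Psi:\M_1(\F)\to\M_2(\F)$, $\Psi(x)=xI_2$, satisfies the hypothesis, has $C=I_2$, and $\rank\Psi(1)=2$. Consistently with this, your argument never really uses $r\ge2$: the contradiction $\binom{r+1}{r}>1$ already holds for $r=1$, although the lemma is false for $r=1$.

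\textbf{What is missing} is an argument on the $(r+1)\times(r+1)$ minors that exploits $r\ge2$.

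The paper's route is as follows. It writes such a minor as $f=DL$, with $D=\det X$ and $L$ linear. Each $\partial f/\partial x_{ab}$ is a linear combination of $r\times r$ minors, hence divisible by $D$. This gives $D\mid(\partial D/\partial x_{ab})L$. Since $D$ is irreducible and does not divide its nonzero cofactor $\partial D/\partial x_{ab}$, it follows that $D\mid L$. Then $L=0$ because $\deg L\le1<r$.

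You can also finish inside your own framework. Let $Y(X)$ be an $(r+1)\times(r+1)$ submatrix of $\Psi(X)$ with determinant $f$.
\begin{enumerate}
\item Expanding $f$ along a row and using Step 2 gives $f=\det(X)\,L(X)$ with $L$ linear.
\item Step 2 also shows that the adjugate of $Y(X)$ equals $\det(X)\,K$ for a constant matrix $K$.
\item In the identity ``adjugate times $Y$ equals $fI_{r+1}$,'' cancel $\det X$ in the domain $\overline{\F}[x_{ab}]$. This yields $KY(X)=L(X)I_{r+1}$.
\item Suppose $L\ne0$. Evaluating at a point where $L\ne0$ shows that $K$ is invertible. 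Then $Y(X)=L(X)K^{-1}$, so $\det Y=(\det K)^{-1}L^{r+1}$.
\item Comparing with $\det Y=\det(X)L$ gives $\det X=(\det K)^{-1}L^{r}$. This is impossible for $r\ge2$, because $\det X$ is irreducible. Hence $L=0$ and $f=0$.
\end{enumerate}
Either way, this is precisely where the hypothesis $r\ge2$ has to enter, and your Step 3 does not supply it.
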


	\begin{proof}
		If $\min\{p,q\}\le r$, the conclusion is immediate.
		Assume that $p,q\ge r+1$. Let $\K$ be an algebraic closure of $\F$, and extend $\Psi$ by the same coefficients to a $\K$-linear map
		\[
		\Psi_{\K}:\M_r(\K)\longrightarrow\M_{p,q}(\K).
		\]
		Every $r\times r$ minor of $\Psi(X)$ has degree at most $r$ and vanishes on matrices of rank at most $r-1$ over $\F$. Since $|\F|>r$, the polynomial argument at the beginning of this section, with $s=r-1$ and $d=r$, shows that these minors also vanish on all singular matrices over $\K$. Thus
		\[
		\rank X<r\quad\Longrightarrow\quad\rank\Psi_{\K}(X)<r
		\qquad(X\in\M_r(\K)).
		\]
		
		Work now in $\K[x_{ab}]$ and put $D(X)=\det X$. The ideal of the singular determinantal hypersurface over the algebraically closed field $\K$ is the prime ideal $(D)$; see  \cite[Theorem 2.10]{BrunsVetter1988}. Hence every polynomial vanishing on that hypersurface is divisible by $D$. In particular, every $r\times r$ minor of $\Psi_{\K}(X)$ is divisible by $D$.
		
		Now let $f$ be an $(r+1)\times(r+1)$ minor of $\Psi_{\K}(X)$.
		It also vanishes on singular matrices over $\K$. Since $\Psi_{\K}$ is
		linear, $f$ is homogeneous of degree $r+1$ unless it is zero.
		Thus
		\begin{equation}\label{eq:large-minor-factor}
			f(X)=D(X)L(X)
		\end{equation}
		for some homogeneous linear polynomial $L$, possibly zero.
		
		By the chain rule for the determinant, each formal  partial derivative
		$\partial f/\partial x_{ab}$ is a constant linear combination
		of $r\times r$ minors of $\Psi_{\K}(X)$. Hence
		\[
		D\mid\frac{\partial f}{\partial x_{ab}}.
		\]
		Differentiating \eqref{eq:large-minor-factor} formally gives
		\[
		\frac{\partial f}{\partial x_{ab}}
		=
		\frac{\partial D}{\partial x_{ab}}L
		+D\frac{\partial L}{\partial x_{ab}},
		\]
		and therefore
		\[
		D\mid\frac{\partial D}{\partial x_{ab}}L.
		\]
		Since $D$ is irreducible and the cofactor $\partial D/\partial x_{ab}$ is a nonzero
		polynomial of degree $r-1$,
		it follows that $D\mid L$.
		As $\deg L\le1<r$, we obtain $L=0$.
		
		Thus every $(r+1)\times(r+1)$ minor of $\Psi_{\K}(X)$ is identically zero, so $\rank\Psi_{\K}(X)\le r$ for all $X\in\M_r(\K)$. Restricting to matrices over $\F$ proves the conclusion.
	\end{proof}

	\begin{theorem}\label{thm: rank nonincreasing lower to higher}
		Let $\F$ be a field with $|\F|>k+1$, let $1\le k\le\min\{m,n\}$, and let
		$$\phi:\M_{m,n}(\F)\to\M_{p,q}(\F)$$ be rank-$k$ nonincreasing.
		Then $\phi$ is rank-$\ell$ nonincreasing for every
		$\ell\in\{k+1,\ldots,\min\{m,n\}\}$.
	\end{theorem}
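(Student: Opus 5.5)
We argue by induction on $\ell$, so it suffices to show that rank-$(\ell-1)$ nonincrease implies rank-$\ell$ nonincrease for each $\ell$ with $k+1\le\ell\le\min\{m,n\}$. Set $r=\ell$. Suppose $\phi$ is rank-$(r-1)$ nonincreasing, and let $X\in\M_{m,n}(\F)$ have $\rank X\le r$. We must show $\rank\phi(X)\le r$. By matrix equivalence we may write $X=P\,\iota(Y)\,Q$, where $P,Q$ are invertible, $Y\in\M_r(\F)$, and $\iota:\M_r(\F)\to\M_{m,n}(\F)$ places an $r\times r$ block in the upper-left corner and pads with zeros.

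Define the linear map
\[
\Psi:\M_r(\F)\to\M_{p,q}(\F),\qquad \Psi(Y)=\phi\bigl(P\,\iota(Y)\,Q\bigr).
\]
If $\rank Y<r$, then $\rank P\iota(Y)Q=\rank Y\le r-1$. By rank-$(r-1)$ nonincrease, $\rank\Psi(Y)\le r-1<r$. This is exactly the hypothesis of Lemma~\ref{lem:singular-input-bound}, with $r=\ell\ge k+1\ge2$. That lemma then gives $\rank\Psi(Y)\le r$ for every $Y$, and in particular $\rank\phi(X)\le\ell$.

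The main point to check is the field-size requirement at each step. Lemma~\ref{lem:singular-input-bound} needs $|\F|>r=\ell$, but we are only given $|\F|>k+1$, which covers just the first step $\ell=k+1$. For larger $\ell$ we therefore cannot invoke the lemma as stated, and I would strengthen its first step instead.

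The lemma uses $|\F|>r$ only to transfer the vanishing of the $r\times r$ minors of $\Psi(X)$ from singular matrices over $\F$ to singular matrices over $\K$. I would bypass this transfer by carrying the whole induction over $\K$.

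1. First, pass from $\F$ to $\K$ once, at level $k$. The $(k+1)\times(k+1)$ minors of $\phi(X)$ have degree $k+1$ and vanish on matrices of rank at most $k$ over $\F$. Since $|\F|>k+1$, the polynomial fact from the beginning of this section (with $s=k$, $d=k+1$) shows that $\phi_\K$ is rank-$k$ nonincreasing over $\K$.
2. Next, run the induction entirely over $\K$. Here the transfer step of the lemma is vacuous, because its remaining argument (divisibility by the prime $D$ and the derivative trick) works over the algebraically closed field $\K$ directly. Each step $\ell-1\to\ell$ over $\K$ then only needs the lemma's conclusion for $\Psi_\K$, with no size condition.
3. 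Finally, restrict back to $\F$.

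So the key step, which I expect to be the only real obstacle, is this bookkeeping: apply Lemma~\ref{lem:singular-input-bound} in the form "for linear maps over an algebraically closed field, no hypothesis on $|\F|$". This form follows by reading its proof from the sentence "Work now in $\K[x_{ab}]$" onward.
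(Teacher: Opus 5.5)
Your proposal is correct and matches the paper's proof: you pass rank-$k$ nonincrease to $\phi_{\K}$ once using $|\F|>k+1$, run the induction over $\K$ with $\Psi(Y)=\phi_{\K}(P\,\iota(Y)\,Q)$ and Lemma~\ref{lem:singular-input-bound}, and then restrict back to $\F$. You do not need to re-read the lemma's proof, since $\K$ is infinite, so $|\K|>r$ and the lemma applies over $\K$ exactly as stated.
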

	
	\begin{proof}
		If $k=\min\{m,n\}$, there is nothing to prove. Let $\K$ be an algebraic closure of $\F$, and extend $\phi$ by the same coefficients to a $\K$-linear map
		\[
		\phi_{\K}:\M_{m,n}(\K)\longrightarrow\M_{p,q}(\K).
		\]
		Every $(k+1)\times(k+1)$ minor of $\phi(X)$ has degree at most $k+1$ and vanishes whenever $\rank X\le k$. Since $|\F|>k+1$, the polynomial argument at the beginning of this section, with $s=k$ and $d=k+1$, shows that the same minors of $\phi_{\K}(X)$ vanish whenever $\rank X\le k$. Thus $\phi_{\K}$ is rank-$k$ nonincreasing.
		
		We prove by induction on $r\ge k$ that $\phi_{\K}$ is rank-$r$ nonincreasing. Assume that $\phi_{\K}$ is rank-$(r-1)$ nonincreasing, where $k<r\le\min\{m,n\}$, and let $A\in\M_{m,n}(\K)$ have rank $r$. Write $A=UV^\top$, with $U\in\M_{m,r}(\K)$ and $V\in\M_{n,r}(\K)$ of full column rank, and define
		\[
		\Psi(X)=\phi_{\K}(UXV^\top),\qquad X\in\M_r(\K).
		\]
		If $\rank X<r$, then $\rank(UXV^\top)=\rank X\le r-1$, so $\rank\Psi(X)\le r-1$. The field $\K$ is infinite, and therefore $|\K|>r$. Lemma~\ref{lem:singular-input-bound}, applied over $\K$, gives
		\[
		\rank\phi_{\K}(A)=\rank\Psi(I_r)\le r.
		\]
		Together with the induction hypothesis for inputs of smaller rank, this proves rank-$r$ nonincrease. Induction and restriction back to $\F$ give the result. All induction steps take place over $\K$. No field-size condition beyond $|\F|>k+1$ is needed.
	\end{proof}
	
	In Subsection~\ref{sec:example}, we give examples showing that a linear map preserving one rank need not preserve another, and that a rank-$k$ nonincreasing linear map need not be rank-$\ell$ nonincreasing for $\ell<k$.
	
	\section{Rank-one  nonincreasing and preserving maps}

	\subsection{Rank-one nonincreasing maps}
	
	The multiplication forms are fundamental in classical rank-preserver theory; see \cite{MaMo59,We67,Mi77}. For an invertible rank-one preserving operator on a fixed matrix space, they are
	\begin{equation}\label{matrix product forms}
		\phi(A)=MAN,\quad \text{or}\quad \text{$m=n$ and $\phi(A)=MA^\top N$,}
	\end{equation}
	where $M\in\GL_m(\F)$ and $N\in\GL_n(\F)$. The invertibility assumption matters: rank-one preservation alone can also admit maps whose range has upper rank one. For higher ranks, Beasley and Laffey \cite{Beasley90} studied preservation of a specified rank, and Loewy \cite{Lo93, Lo93II} studied rank-$k$ nonincrease under the hypotheses of those papers. We recall below the rank-one results used in our proofs.
	
	More generally, Botta \cite{Bo87} classified rank-one nonincreasing linear maps between matrix spaces of different sizes over any field $\F$. We recall the result. 
	
	\begin{theorem} [Botta  \cite{Bo87}]
		\label{thm: rank decreasing one gen}
		Suppose $m,n,p,q\in\Z_+$. A linear map $\phi: \M_{m,n}(\F)\to \M_{p,q}(\F)$ is 
		rank-one nonincreasing 
		if and only if $\phi$ takes one of the following forms:  
		\begin{enumerate}
			\item There exist $M\in \M_{p,m}(\F)$ and $N\in \M_{n,q}(\F)$  such that $\phi(A)=MAN$. 
			
			\item There exist $M\in \M_{p,n}(\F)$ and $N\in \M_{m,q}(\F)$  such that $\phi(A)=MA^\top  N$. 
			
			\item  There exist   a linear map $g:\M_{m,n}(\F)\to \F^q$ and a vector $u\in \F^p$  such that
			$\phi(A)=ug(A)^\top $. 
			\item There exist    a linear map $f:\M_{m,n}(\F)\to \F^p$ and  a vector $v\in \F^q$ such that
			$\phi(A)=f(A) v^\top $. 
			
		\end{enumerate}
	\end{theorem}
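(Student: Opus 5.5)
The plan is to prove both directions of Botta's theorem, with the "if" direction handled quickly and the "only if" direction carrying all the weight.

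For the "if" direction, each form visibly sends a rank-one matrix $A=xy^\top$ to a matrix of rank at most one:
\[
M x y^\top N=(Mx)(N^\top y)^\top, \qquad M y x^\top N=(My)(N^\top x)^\top,
\]
and forms (3) and (4) have rank at most one everywhere. Linearity then handles $A=0$.

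For the "only if" direction, I would work with the bilinear map
\[
\beta(x,y)=\phi(xy^\top), \qquad x\in\F^m,\ y\in\F^n .
\]
By hypothesis $\beta(x,y)$ has rank at most one for all $x,y$. The steps are as follows.

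\emph{Fixing one argument.} Fix $x$. The linear map $y\mapsto\beta(x,y)$ sends every vector to a matrix of rank at most one. A linear space of matrices of rank at most one is known to consist of matrices with a common column $u$ or a common row $v$. I would reprove this directly: if $ab^\top$ and $cd^\top$ together with all their linear combinations have rank at most one, then $a\parallel c$ or $b\parallel d$. This uses only a $2\times2$ minor and works over every field. So for each $x$, the image $\beta(x,\F^n)$ is of "column type" $u_x w(y)^\top$ or of "row type" $w(y)v_x^\top$.

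\emph{Assembling the pieces.} Next I would show the type is uniform in $x$. Two cases arise.
\begin{itemize}
\item[(a)] All the spaces $\beta(x,\F^n)$ share one common column vector $u$. Then $\phi(A)=u\,g(A)^\top$, which is form (3).
\item[(b)] They all share one common row vector $v$. This gives form (4).
\end{itemize}
Otherwise, the column vector $u_x$ varies with $x$. Applying the same linear-space-of-rank-one argument to the whole image $\beta(\F^m,\F^n)$ along the other variable forces $u_x$ to depend linearly on $x$ (up to a scalar), so $u_x=Mx$. The corresponding row then depends only on $y$, linearly, giving $\beta(x,y)=Mx\,(N^\top y)^\top$. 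This is form (1). Alternatively the roles swap, $\beta(x,y)=My\,(N^\top x)^\top$, which is form (2).

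The main obstacle is precisely this gluing step. One must handle $x$ for which $\beta(x,\cdot)$ has image of dimension at most one, where the column type and row type are ambiguous. One must also show that the scalar normalizations can be chosen linearly. I would first treat generic $x$ with $\dim\beta(x,\F^n)\ge2$, where the type is unique. I would then use linearity in $x$ on sums $x+x'$ to show the type cannot change between two such generic vectors: a mixed type produces a rank-two matrix in $\beta(x+x',\cdot)$. Finally I would extend to degenerate $x$ by writing them as differences of generic ones. If no $x$ has image of dimension at least two, a separate short argument along the symmetric variable places $\phi$ in form (3) or (4).
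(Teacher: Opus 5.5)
The paper gives no proof of this statement. It quotes the theorem from Botta \cite{Bo87} and uses it as a black box, so your proposal has to stand on its own. Several parts are fine: the ``if'' direction; the slice lemma (a linear space of matrices of rank at most one has a common column or a common row, over every field); and uniformity of type on $X_2=\{x:\dim\beta(x,\F^n)\ge2\}$. For the last, write $\beta(x,y)=u_xw_x(y)^\top$ and $\beta(x',y)=z_{x'}(y)v_{x'}^\top$; then you only need that $\F^n$ is not the union of the proper subspaces $w_x^{-1}(\F v_{x'})$ and $z_{x'}^{-1}(\F u_x)$.

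The genuine gap is the step you yourself call the main obstacle, and your plan does not close it. You must show that when the column direction $u_x$ varies, $\beta$ has a \emph{bilinear} factorization $\beta(x,y)=Mx\,(N^\top y)^\top$. Your justification applies the rank-one-space lemma to ``the whole image $\beta(\F^m,\F^n)$'', but that set is not a linear space (its span is $\Ima\phi$). Applied to the slices $\beta(\F^m,y)$, the lemma only gives $\beta(x,y)=c(x,y)\,u_xr_y^\top$ with an uncontrolled scalar $c(x,y)$. So the normalization problem you mention is never resolved; your last paragraph treats only type-uniformity and degenerate $x$.

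The missing idea is a proportionality argument. Suppose $x,x'\in X_2$ have non-parallel columns. Then $\rank\beta(x+x',y)\le1$ forces $w_x(y)$ and $w_{x'}(y)$ to be dependent for every $y$, so write $w_{x'}(y)=\lambda(y)w_x(y)$ for $y\notin\ker w_x$. Comparing $y_1$, $y_2$, $y_1+y_2$ with $w_x(y_1),w_x(y_2)$ independent gives $\lambda(y_1)=\lambda(y_2)$. Chaining, $\lambda$ is constant on $\F^n\setminus\ker w_x$, which spans $\F^n$, so $w_{x'}=c\,w_x$. Now fix $x_1,x_2\in X_2$ with non-parallel columns. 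Every $u_x$ is non-parallel to one of them, so all $w_x$ are multiples of a single linear map $w$. Choose $y_0,z$ with $w(y_0)^\top z=1$ and set $Mx:=\beta(x,y_0)z$. This map is linear by construction, so linearity is automatic rather than something to arrange. It satisfies $\beta(x,y)=Mx\,w(y)^\top$ on $X_2\times\F^n$, hence everywhere once $X_2$ spans $\F^m$.

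That spanning claim, your ``differences of generic vectors'', needs a field hypothesis you did not state. Take $x\notin X_2$, $x_0\in X_2$, and a $2\times2$ minor $h$ of the matrix of $y\mapsto\beta(x,y)$ with $h(x_0)\ne0$. Then $h(x+tx_0)=t\,(h(x_0)t+b)$ has at most one nonzero root, so $|\F|\ge3$ suffices. Over $\F_2$ the step fails. For example, on $\M_2(\F_2)$ set $\phi(E_{11})=\phi(E_{21})=\phi(E_{22})=E_{11}$ and $\phi(E_{12})=E_{12}$; then $X_2=\{e_1\}$. This map is still of form (3), so the example breaks your argument, not the theorem. The gap is harmless under the paper's standing assumption $|\F|\ge5$, but Botta's result is attributed over arbitrary fields.
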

	
	\begin{remark}
		The four forms in Theorem~\ref{thm: rank decreasing one gen} can overlap. The following are examples of
		rank-one nonincreasing linear maps $\phi: \M_2(\F)\to \M_2(\F)$ that have more than one of these forms.   
		\begin{enumerate}
			\item Let  
			\begin{equation}
				\phi\left(\mtx{a_{11} &a_{12}\\a_{21} &a_{22}}\right)=\mtx{1&2\\1&2}\mtx{a_{11} &a_{12}\\a_{21} &a_{22}}=\mtx{1\\1}\mtx{a_{11}+2a_{21} &a_{12}+2a_{22}}.
			\end{equation}
			Thus $\phi$ has both forms (1) and (3).
			\item Let $\psi: \M_{2}(\F)\to \F$ be a linear functional, $C=uv^\top  \in \M_2(\F)$ has rank one where $u, v\in\F^2$, and 
			\begin{equation} 
				\phi(A)=\psi(A) C=u\psi(A) v^\top .
			\end{equation}
			Thus $\phi$ has forms   (3)  and (4).
		\end{enumerate}
	\end{remark}

	\subsection{Rank-one preservers}
	
	Li, Rodman, and \v{S}emrl \cite[Theorem 2.1]{CKLi} characterized linear rank-one preservers as follows.
	
	\begin{theorem} [Li, Rodman, \v{S}emrl \cite{CKLi}]
		\label{thm: rank one gen}
		Every linear rank-one preserver $\phi: \M_{m,n}(\F)\to \M_{p,q}(\F)$  
		takes one of the following forms for certain nonsingular  matrices $M\in \GL_{p}(\F)$ and $N\in \GL_{q}(\F)$:  
		\begin{enumerate}
			\item[(1)] $m\leq p$, $n\leq q$, $\phi(A)=M\mtx{A &0\\0&0}N$.  
			\item[(2)] $m\leq q$, $n\leq p$, $\phi(A)=M\mtx{A^\top  &0\\0&0}N$.
			\item[(3)] $\phi(A)=M\mtx{\psi(A) & 0}N$, where $\psi:\M_{m,n}(\F)\to \M_{p,1}(\F)$ is a linear map such that $\psi(A)\neq 0$  for all  $A\in \M_{m,n}(\F)$  and $\rank A=1$.
			\item[(4)] $\phi(A)=M\mtx{\psi(A) \\ 0}N$, where $\psi:\M_{m,n}(\F)\to \M_{1,q}(\F)$ is a linear map such that $\psi(A)\neq 0$  for all  $A\in \M_{m,n}(\F)$    and $\rank A=1$.
		\end{enumerate}
	\end{theorem}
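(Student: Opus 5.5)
The plan is to reduce the statement to Botta's classification (Theorem~\ref{thm: rank decreasing one gen}) and then read off, form by form, what rank-one \emph{preservation} adds to rank-one \emph{nonincrease}. First, apply Theorem~\ref{thm: preserving to decreasing} with $k=1$. Under the standing assumption $|\F|\ge5>3=k+2$, a rank-one preserver $\phi$ is rank-one nonincreasing. Hence $\phi$ has one of the four forms (1)--(4) of Theorem~\ref{thm: rank decreasing one gen}. (For a version over arbitrary fields one would instead prove nonincrease directly, but with the paper's field assumption this is immediate.)

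Next, treat the multiplication form $\phi(A)=MAN$ with $M\in\M_{p,m}(\F)$ and $N\in\M_{n,q}(\F)$. For nonzero $u\in\F^m$ and $v\in\F^n$ we have
\[
\phi(uv^\top)=(Mu)(N^\top v)^\top .
\]
This has rank one if and only if $Mu\ne0$ and $N^\top v\ne0$. Rank-one preservation therefore forces $M$ to have trivial kernel and $N$ to have full row rank. In particular $m\le p$ and $n\le q$. Extend the columns of $M$ to a basis of $\F^p$, giving $P\in\GL_p(\F)$ whose first $m$ columns are $M$. Likewise extend the rows of $N$ to $Q\in\GL_q(\F)$ whose first $n$ rows are $N$. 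Then
\[
MAN=P\begin{pmatrix}A&0\\0&0\end{pmatrix}Q,
\]
which is form~(1). The transpose form $\phi(A)=MA^\top N$ is handled identically: rank-one preservation forces $M\in\M_{p,n}(\F)$ to be injective and $N\in\M_{m,q}(\F)$ to have full row rank. This gives $n\le p$, $m\le q$ and form~(2).

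For Botta's form (4), $\phi(A)=f(A)v^\top$, preservation requires $v\ne0$ and $f(A)\ne0$ for every rank-one $A$. Choose $Q\in\GL_q(\F)$ with first row $v^\top$. Then
\[
f(A)v^\top=\begin{pmatrix}f(A)&0\end{pmatrix}Q .
\]
Taking $M=I_p$, $N=Q$ and $\psi=f$ gives form~(3). Symmetrically, Botta's form (3), $\phi(A)=u\,g(A)^\top$, becomes form~(4). Here $P\in\GL_p(\F)$ has first column $u$ and $\psi(A)=g(A)^\top$. Conversely, each listed form is visibly a rank-one preserver under the stated conditions, although the statement only requires the forward direction.

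I expect no deep obstacle. The only delicate point is the first step: the reduction to rank-one nonincrease depends on Theorem~\ref{thm: preserving to decreasing}, hence on $|\F|>3$. To cover all fields as in \cite{CKLi}, one would need a separate argument, for instance Botta's method applied directly to preservers. The remaining work is the routine check that degenerate coefficients (a noninjective $M$, a zero $u$ or $v$) are excluded precisely by preservation, and the basis-extension bookkeeping.
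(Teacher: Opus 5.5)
Your proposal is correct. Note that the paper gives no proof of this statement; it quotes it from \cite[Theorem 2.1]{CKLi}. So you are taking a different route from the paper, namely an actual derivation.

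You obtain the result from Botta's classification (Theorem~\ref{thm: rank decreasing one gen}), which the paper also quotes. In the multiplication forms, preservation forces $M$ to be injective and $N$ to have full row rank, since $(Mu)(N^\top v)^\top\ne0$ for all nonzero $u,v$. Extending to invertible matrices then gives forms (1) and (2). The forms $ug(A)^\top$ and $f(A)v^\top$ become forms (4) and (3) after completing $u$ or $v^\top$ to an invertible matrix. This basis-extension step is the same device the paper uses in the proof of the all-ranks corollary that follows. Your route makes Theorem~\ref{thm: rank one gen} a corollary of Botta's theorem, so the paper would need only one external rank-one result instead of two.

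One correction concerns the step you call delicate. It does not need Theorem~\ref{thm: preserving to decreasing} or $|\F|>3$. For $k=1$ the only matrix of rank less than one is $0$, and $\phi(0)=0$. Hence every rank-one preserver is rank-one nonincreasing over every field, trivially. Your argument therefore proves the statement over every field for which Botta's theorem holds, and the paper states that theorem for arbitrary $\F$. No separate argument ``applying Botta's method directly to preservers'' is needed.
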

	
	Theorem \ref{thm: rank one gen} implies the following characterization of linear rank preservers.
	
	\begin{corollary}
		A linear map $\phi: \M_{m,n}(\F)\to \M_{p,q}(\F)$ is a rank-$k$ preserver
		for every $k\in [\min\{m,n\}]$ if and only if $\phi$ takes one of the following forms for certain nonsingular  matrices $M\in \GL_{p}(\F)$ and $N\in \GL_{q}(\F)$:  
		\begin{enumerate}
			\item[(1)] $m\leq p$, $n\leq q$, $\phi(A)=M\mtx{A &0\\0&0}N$.   
			\item[(2)] $m\leq q$, $n\leq p$, $\phi(A)=M\mtx{A^\top  &0\\0&0}N$.
		\end{enumerate}
	\end{corollary}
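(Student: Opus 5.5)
The plan is to prove both directions, with the nontrivial one being necessity. For sufficiency, I will take $\phi$ of form (1) or (2) with $M,N$ nonsingular. Multiplying by nonsingular matrices preserves rank. Padding $A$ (or $A^\top$) with zero blocks also leaves the rank unchanged. Hence $\rank\phi(A)=\rank A$ for every $A$, so $\phi$ is a rank-$k$ preserver for every $k\in[\min\{m,n\}]$.

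For necessity, I start from the fact that $\phi$ is in particular a rank-one preserver. Theorem~\ref{thm: rank one gen} then says $\phi$ has one of the four forms listed there, and it remains to rule out forms (3) and (4) when $\min\{m,n\}\ge2$. In form (3), every image is $M\mtx{\psi(A)&0}N$, whose rank is at most one. So no matrix of rank $2$ can be sent to a matrix of rank $2$, which contradicts rank-$2$ preservation. Form (4) is excluded in exactly the same way.

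The only point needing care is the degenerate case $\min\{m,n\}=1$. There the only admissible rank is $k=1$, and forms (3) and (4) must be shown to coincide with (1) or (2). Say $m=1$, so $A$ is a row vector of length $n$ and every nonzero $A$ has rank one. In form (3), $\psi:\M_{1,n}(\F)\to\M_{p,1}(\F)$ is injective. Hence $n\le p$, and I can write $\psi(A)=SA^\top$ with $S\in\M_{p,n}(\F)$ of full column rank $n$. Extending $S$ to an invertible $p\times p$ matrix $S'$ gives
\[
M\mtx{\psi(A)&0}N = MS'\mtx{A^\top&0\\0&0}N,
\]
which is form (2), with $m=1\le q$ and $n\le p$. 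Form (4) with $m=1$ reduces to form (1) by the analogous argument. The case $n=1$ is symmetric.

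I expect this degenerate bookkeeping to be the main obstacle. The main case follows immediately from the upper rank bound of forms (3) and (4).
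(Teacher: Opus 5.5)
Your proposal is correct and follows essentially the same route as the paper: you invoke the Li--Rodman--\v{S}emrl classification and use the upper rank one of forms (3) and (4) to force $\min\{m,n\}=1$. Then, as the paper does, you write $\psi$ as multiplication by a full-rank matrix and extend that matrix to an invertible one, which puts $\phi$ in form (1) or (2). Your explicit sufficiency check, which the paper leaves implicit, is also fine.
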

	
	\begin{proof}
		$\phi$ must take one of the four forms in Theorem \ref{thm: rank one gen}.  
		
		First, suppose that $\phi$ takes the form given in Theorem~\ref{thm: rank one gen}\,(3). 
		Then $\rank \phi(A)\le 1$. By assumption, $\phi$ preserves every possible rank, so that $m=1$ or $n=1$. 
		\begin{itemize}
			\item 
			When $m=1$, the linear map $\psi:\M_{m,n}(\F)\to \M_{p,1}(\F)$ can be expressed as
			\[
			\psi(A)=KA^\top
			\]
			for some $K\in \M_{p,n}(\F)$. Since $\phi$ is a  rank-$k$ preserver, 
			$\ker\psi=\{0\}$, so $K$ has full column rank $n$, and thus $p\ge n$. 
			The columns of $K$ can be extended to a basis, yielding a nonsingular matrix 
			$K_1=\mtx{K & K'}\in \M_p(\F)$. 
			There exist nonsingular  matrices $M_1\in \M_p(\F)$ and 
			$N_1\in \M_q(\F)$ such that
			\[
			\phi(A)=M_1\mtx{\psi(A) & 0}N_1
			=M_1K\mtx{A^\top & 0}N_1
			= M_1K_1\mtx{A^\top & 0\\ 0 & 0}_{p\times q}N_1.
			\]
			Thus, $\phi$ can be expressed in the form given in 
			Theorem~\ref{thm: rank one gen}\,(2).
			
			\item When $n=1$, similarly, $\phi$ can be expressed in the form given in 
			Theorem~\ref{thm: rank one gen}\,(1).
			
		\end{itemize}
		
		The same argument applies when $\phi$ takes the form given in Theorem~\ref{thm: rank one gen}\,(4), 
		it can also be expressed in the form given in Theorem~\ref{thm: rank one gen}\,(1) or (2).
		This completes the proof. 
	\end{proof}

	\section{Rank-two nonincreasing and preserving maps}
	
	\subsection{The rank-two classification}
	
	Our main classification describes rank-two nonincreasing linear maps  
	$\M_{m,n}(\F) \to \M_{p,q}(\F)$ as follows.
	
	For every field, let
	\[
	\Alt_n(\F)=\{A\in\M_n(\F):A^\top=-A,\ a_{ii}=0\text{ for every }i\}
	\]
	be the space of alternating matrices. In characteristic different from two this is the skew-symmetric matrix space. The zero-diagonal condition is essential in characteristic two. Every nonzero matrix in $\Alt_3(\F)$ has rank two.
	
	\begin{theorem}\label{thm: M_mn rank 2 nonincreasing}
		Let $\F$ be a field with $|\F|\ge5$ and $\min\{m,n,p,q\}\ge2$. Let $\phi:  \M_{m,n}(\F) \to \M_{p,q}(\F)$ be a  rank-two nonincreasing linear map. Then $\phi$ has at least one of the following forms:
		\begin{enumerate}
			\item[(1)]  $\phi(A)=MAN$ for certain $M\in \M_{p,m}(\F)$ and $N\in \M_{n,q}(\F)$.  
			\item[(2)] $\phi(A)=MA^\top  N$ for certain $M\in \M_{p,n}(\F)$ and $N\in \M_{m,q}(\F)$.
			
			\item[(3)]  $\phi(A)=\delta (A) V^\top $ where $V\in \M_{q,2}(\F)$ is a constant matrix and $\delta:\M_{m,n}(\F)\to \M_{p,2}(\F)$ is  a linear map.
			\item[(4)]  $\phi(A)=U\eta(A)^\top $ where  $U\in \M_{p,2}(\F)$ is a constant matrix and $\eta:\M_{m,n}(\F)\to \M_{q,2}(\F)$ is a linear map.
			\item[(5)]  $\phi(A)=u\rho(A)^\top+\tau(A) v^\top =\mtx{u&\tau(A)} \mtx{\rho(A)&v}^\top $ for vectors $u\in \F^p$, $v\in \F^q$   and  linear maps $\rho:\M_{m,n}(\F)\to \F^q$ and  $\tau:\M_{m,n}(\F)\to \F^p$.  
			
			\item[(6)] 
			$\min\{p,q\}\ge 3$, and there are nonsingular matrices $M\in \GL_p(\F)$, $N\in \GL_q(\F)$, and a linear map $\gamma: \M_{m,n}(\F)\to \Alt_3(\F)$, such that 
			\begin{equation}\label{rank 2 so3}
				\phi(A)= M\mtx{\gamma(A) &0\\0 &0}_{p\times q} N.  
			\end{equation}
			
		\end{enumerate}
		Conversely, every map of these forms is rank-two nonincreasing.
	\end{theorem}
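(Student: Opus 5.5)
The converse is immediate; the substance is the forward direction. For the converse: forms (1) and (2) satisfy $\rank MAN\le\rank A$ and $\rank MA^\top N\le\rank A$. Forms (3), (4), (5) are products through an inner dimension of two, e.g. $\mtx{u&\tau(A)}\mtx{\rho(A)&v}^\top$, so every output has rank at most two. In form (6), every element of $\Alt_3(\F)$ has rank at most two, and equivalence by $M,N$ preserves rank.

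For the forward direction I split according to whether $\Ima\phi$ contains a matrix of rank at least three.

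\emph{Case A: $\Ima\phi$ has a matrix of rank $\ge3$.} The goal is to show $\phi$ is rank-one nonincreasing, so that Theorem~\ref{thm: rank decreasing one gen} applies. Its forms (3) and (4) have images of rank at most one, so they are excluded here, leaving forms (1) and (2) as required. The first step is to pass to the algebraic closure $\K$, using the polynomial argument of Section~2 with $d=3$, $s=2$ and $|\F|\ge5>3$. Next, suppose $\rank\phi(E)\ge2$ for some rank-one $E$. Then for suitable rank-one $F$ with $\rank(E+tF)\le2$ for all $t$, the pencil $\phi(E)+t\phi(F)$ has all $3\times3$ minors vanishing. I would combine such pencils with a matrix $A_0$ satisfying $\rank\phi(A_0)\ge3$; after composing with equivalences, $A_0$ can be taken to be a sum of at most three matrix units. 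Fix a $3\times3$ minor $\Delta$ that is nonzero at $\phi(A_0)$. Then $f(X)=\Delta(\phi(X))$ is a cubic on the span of a few matrix units, and it vanishes on rank-$\le2$ matrices. By the ideal statement (\cite[Theorem 2.10]{BrunsVetter1988}), $f$ lies in the ideal of $3\times3$ minors, so in degree three it is a linear combination of the source $3\times3$ minors. If $\phi(E)$ has rank $\ge2$, the restriction of $\phi$ to a suitable $3\times3$ block is forced through a subspace of small dimension. That makes $f$, restricted to a $3\times3$ generic minor, a sum of at most four products of linear forms. This is the content I expect Lemma~\ref{lem:cubic-obstruction} to encode: since the generic $3\times3$ determinant has Chow rank five \cite{IltenTeitler2016,IltenSuess2020}, such an $f$ must vanish. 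That contradicts $\rank\phi(A_0)\ge3$ in the relevant block, and so $\phi$ is rank-one nonincreasing.

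\emph{Case B: $\Ima\phi$ has upper rank at most two.} Here I invoke the Atkinson--Lloyd classification of spaces of bounded rank two \cite{AtkinsonLloyd1981,Atkinson1983}, valid for $|\F|\ge3$ and certainly for $|\F|\ge5$. A linear space of $p\times q$ matrices of rank $\le2$ is, up to equivalence, of one of four kinds: all matrices have column space in a fixed 2-dimensional space, giving form (4); all have row space in a fixed 2-dimensional space, giving form (3); all are supported on one row plus one column, giving form (5) via $u\rho^\top+\tau v^\top$; or it is contained in an embedded $\Alt_3$, giving form (6), which needs $\min\{p,q\}\ge3$. Composing $\phi$ with the description of the space $\Ima\phi$ then yields the linear maps $\delta,\eta,\rho,\tau,\gamma$ directly. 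For $\rho$ and $\tau$ one must first choose a linear splitting of the representation, but since the space is $\{u x^\top+y v^\top\}$ this can be done linearly.

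The main obstacle is Case A: arranging that the forced cubic genuinely has a four-term product decomposition. I would first try to realize $\phi(E)$ having rank two as making one row or column block of the relevant $3\times3$ minor depend on only two linear forms. Laplace expansion along that block then yields the product terms, and the count must be kept at four rather than six. This bookkeeping, together with the small-field reductions over $\F$ versus $\K$, is where I expect the real work to lie.
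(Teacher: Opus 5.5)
\textbf{There is a genuine gap in Case~A.} Case~B is acceptable as a reduction, provided you cite the classification of linear spaces of upper rank two (Atkinson--Lloyd and Atkinson, extended to all fields by de Seguins Pazzis). The paper avoids this citation and derives forms (3)--(6) directly.

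Case~A, however, carries the whole difficulty of the theorem, and you never produce a cubic with a four-term product decomposition. Your claim that the restriction of $\phi$ to a $3\times3$ source block is ``forced through a subspace of small dimension'' is unjustified, and it looks for structure on the wrong side. Your closing remark that a row of the minor should depend on only two linear forms also does not say where the needed zeros come from. You mention the pencils $\phi(E)+t\phi(F)$ but never draw a consequence from them. Without one, you have no control over $\Delta(\phi(X))$.

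The missing idea is the normalization in Step~1 of the paper's proof. Choose invertible $P,Q$ with $P\phi(E)Q=\diag(I_2,0)$. For every rank-one $F$ and every $x\in\F$ we have $\rank(xE+F)\le2$. Consider the $3\times3$ minor of $x\,\diag(I_2,0)+P\phi(F)Q$ on rows $1,2,i$ and columns $1,2,j$, where $i,j\ge3$. It is a polynomial of degree at most two in $x$, its $x^2$-coefficient is the $(i,j)$ entry of $P\phi(F)Q$, and it vanishes for every $x\in\F$. Hence that entry is zero. Since rank-one matrices span the source space,
\[
P\phi(X)Q=\begin{pmatrix}G(X)&H(X)\\ L(X)&0\end{pmatrix}\qquad(X\in\M_{m,n}(\F)).
\]
Every $3\times3$ minor of such a matrix falls into one of three cases:
\begin{itemize}
\item it vanishes, because the zero block is too large;
\item it is a linear form times a $2\times2$ minor of $H(X)$ or $L(X)$, which is two products;
\item it has the shape $\det\left(\begin{smallmatrix}G&h\\ \ell^\top&0\end{smallmatrix}\right)$, which is four products.
\end{itemize}
The last case is exactly the situation of Lemma~\ref{lem:cubic-obstruction}. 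The ideal-membership and Chow-rank argument of that lemma then shows that every $3\times3$ minor vanishes identically, which contradicts $\rank\phi(A_0)\ge3$.

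With this step supplied, your dichotomy argument works. It is considerably shorter than the paper's Steps~2--3 together with Lemma~\ref{lem:binary-normal-form}. The trade-off is that it imports the upper-rank-two classification, whereas the paper obtains the explicit forms, including the $\Alt_3$ case, by hand.
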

	
	The proof is given in Subsection~\ref{subsec:rank-two-proof}. 
	
	The first two forms are multiplication and transpose-multiplication maps. Forms (3) and (4) have a common row or column space of dimension at most two; form (5) is a sum of two rank-one terms; and form (6) has range equivalent to a subspace of an embedded copy of $\Alt_3(\F)$. 
	
	Forms (3)--(6) reflect the structure of  matrix spaces of upper rank 2 studied by Atkinson and Lloyd \cite{AtkinsonLloyd1981, Atkinson1983}. 
	The classification was extended by de Seguins Pazzis to arbitrary fields
	\cite{PAZZIS2012}. 
	
	\subsection{Rank-two preserving maps}
	
	Beasley and Laffey \cite{Beasley01} studied linear rank-two preservers on a fixed matrix space. The following consequence of Theorem~\ref{thm: M_mn rank 2 nonincreasing} describes rank-two preservers between matrix spaces of possibly different dimensions over fields with at least five elements.
	
	\begin{corollary}
		\label{thm: Mmn rank 2 preserver}
		Let $\F$ be a field with $|\F|\ge5$ and $\min\{m,n,p,q\}\ge2$. Then every  linear rank-two preserver $\phi:  \M_{m,n}(\F) \to \M_{p,q}(\F)$ takes at least one of the following forms:
		\begin{enumerate}
			\item[(1)]  $\phi(A)=MAN$ where $M\in \M_{p,m}(\F)$ has full column rank and $N\in \M_{n,q}(\F)$ has full row rank. 
			This is possible only when $p\ge m$ and $q\ge n$. 
			\item[(2)] $\phi(A)=MA^\top  N$ where $M\in \M_{p,n}(\F)$ has full column rank and  $N\in \M_{m,q}(\F)$ has full row rank. 
			This is possible only when $p\ge n$ and $q\ge m$. 
			
			\item[(3)]  $\phi(A)=\delta (A) V^\top $ where $V\in \M_{q,2}(\F)$ has rank 2  and $\delta:\M_{m,n}(\F)\to \M_{p,2}(\F)$ is  a linear map which is rank-two preserving, in other words, 
			$\{A\in \M_{m,n}(\F)\mid \rank (\delta(A))\le 1\}$ does not contain any matrix of rank 2.
			
			\item[(4)]  $\phi(A)=U\eta(A)^\top $ where  $U\in \M_{p,2}(\F)$ has rank 2  and $\eta:\M_{m,n}(\F)\to \M_{q,2}(\F)$ is a linear map which is rank-two preserving, in other words, 
			$\{A\in \M_{m,n}(\F)\mid \rank (\eta(A))\le 1\}$ does not contain any matrix of rank 2.
			\item[(5)]  $\phi(A)=u\rho(A)^\top+\tau(A) v^\top$ where $u\in \F^p\setminus\{0\}$, $v\in \F^q\setminus\{0\}$,   and  linear maps $\rho:\M_{m,n}(\F)\to \F^q$ and  $\tau:\M_{m,n}(\F)\to \F^p$ satisfy that 
			$\{A\in \M_{m,n}(\F)\mid \tau(A)\in\F u \text{ or } \rho(A)\in\F v\}$ does not contain any matrix of  rank $2$. 
			
			\item[(6)] 
			$\min\{p,q\}\ge3$, \((m,n)=(2,2),\ (3,2),\) or \((2,3)\), and there are nonsingular matrices \(M\in\GL_p(\F)\), \(N\in\GL_q(\F)\), and a linear map
			\(\gamma:\M_{m,n}(\F)\to\Alt_3(\F)\) such that 
			\[\phi(A)=M\mtx{\gamma(A)&0\\0&0}_{p\times q}N,
			\]
			in which one of the following cases holds.
			\begin{enumerate}
				\item[6a)] $(m,n)=(2,2),$ and $\ker\gamma$ is a matrix space of upper rank $1$. More precisely, $\ker\gamma$ has one of the following forms: 
				\begin{enumerate}
					\item [i.] $\ker\gamma =\F (uv^\top)$ for fixed nonzero vectors $u,v\in\F^2$;
					\item [ii.] $\ker\gamma =\{ uv^\top\mid u\in\F^2\}$ for a fixed  nonzero vector $v\in\F^2$;
					\item [iii.] $\ker\gamma =\{ uv^\top\mid v\in\F^2\}$ for a fixed  nonzero vector $u\in\F^2$. 
				\end{enumerate}
				
				\item[6b)] $(m,n)=(3,2)$ or $(2,3)$, and there is a nonsingular matrix $P\in\GL_3(\F)$ and a nonzero vector $v\in\F^2\setminus\{0\}$ such that 
				\[
				\gamma(A)=\mtx{0 &-p_ 1(A) &-p_ 2(A)\\p_ 1(A) &0 &-p_ 3(A)\\ p_ 2(A) &p_ 3(A) &0},  
				\]
				where  
				\[
				\mtx{p_ 1(A)\\p_ 2(A)\\p_ 3(A)} =\begin{cases}
					PAv, &\text{when $(m,n)=(3,2)$;} \\
					PA^\top v, &\text{when $(m,n)=(2,3)$.}
				\end{cases}
				\]
			\end{enumerate}

		\end{enumerate}
		Conversely, every map satisfying one of the listed sets of conditions preserves rank two.
	\end{corollary}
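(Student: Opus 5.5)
The plan is to derive Corollary~\ref{thm: Mmn rank 2 preserver} from Theorem~\ref{thm: M_mn rank 2 nonincreasing}. Since $|\F|\ge5>4=k+2$ for $k=2$, Theorem~\ref{thm: preserving to decreasing} shows that every rank-two preserver is rank-two nonincreasing. Hence $\phi$ has one of the six forms (1)--(6). For each form we then determine exactly when $\rank\phi(A)=2$ for every rank-two $A$; this gives both the stated conditions and the converse. Since rank-two preservation and the listed conditions are both pointwise conditions on rank-two inputs, the converse will come from the same equivalences.

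\emph{Forms (1)--(5).} For form (1), suppose $M$ does not have full column rank. Choose $x\ne0$ with $Mx=0$, and choose a rank-two $A$ whose column space contains $x$. Then $\rank MAN\le1$. The same argument on the right shows that $N$ has full row rank, and the converse is clear. Form (2) is the same argument applied to $A^\top$.

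For form (3), write $\phi(A)=\delta(A)V^\top$. If $\rank V\le1$, then $\phi$ has rank at most one everywhere, which is impossible because rank-two matrices exist in the source. So $\rank V=2$, and then $\rank\phi(A)=\rank\delta(A)$; this is exactly the stated condition. Form (4) is symmetric.

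For form (5), $\rank\phi(A)=2$ requires both $u\ne0$ and $v\ne0$. Given that, $\phi(A)=\mtx{u&\tau(A)}\mtx{\rho(A)&v}^\top$ has rank two if and only if both factors have rank two, that is, $\tau(A)\notin\F u$ and $\rho(A)\notin\F v$. One direction is immediate from the product. For the other, note that if $\tau(A)=cu$, then $\phi(A)=u(\rho(A)+cv)^\top$ has rank at most one, and similarly if $\rho(A)\in\F v$.

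\emph{Form (6).} Every nonzero alternating $3\times3$ matrix has rank two. So $\phi$ preserves rank two if and only if $\ker\gamma$ contains no rank-two matrix. Since $\dim\Alt_3(\F)=3$, we have $\dim\ker\gamma\ge mn-3$.

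First we bound the source size. A subspace of dimension $mn-3$ avoiding rank two must be small. If $\min\{m,n\}\ge3$, a $3\times3$ corner already gives a $9$-dimensional space, and the kernel restricted to it has dimension at least $6$. A space of matrices of rank at most one has dimension at most $\max\{m,n\}$, and in $\M_3$ every space of dimension $6$ contains a rank-two matrix. More precisely, we intersect the kernel with the span of rank-two-rich subspaces, such as the $2\times2$ corners. In $\M_{2,n}$ we need an $(2n-3)$-dimensional kernel with all elements of rank at most one. Such a space is either $\{uw^\top\}$ with $u$ fixed, of dimension $n$, or $\{w v^\top\}$ with $v$ fixed, of dimension $2$. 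This forces $2n-3\le n$, so $n\le3$, and the bound holds over any field. This leaves $(2,2)$, $(2,3)$, $(3,2)$. The case $(m,n)=(2,2)$ is excluded because the kernel would have dimension at least one and rank at most one, which is the upper-rank-one classification i.--iii.

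For $(3,2)$, the kernel has dimension at least $3$ and is a rank-$\le1$ space. By the upper-rank-one classification it is $\{wv^\top: w\in\F^3\}$ for a fixed $v$. Then $\gamma$ factors through $A\mapsto Av$, and we identify $\Alt_3(\F)\cong\F^3$ through $P$. The case $(2,3)$ follows by transposition.

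The main obstacle is this size reduction, together with the structure of upper-rank-one spaces. We use the classical fact that a linear space of matrices of rank at most one has a common column or a common row. We would prove this fact directly, since two rank-one matrices whose sum has rank at most one share a row space or a column space.
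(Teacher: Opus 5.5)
Your treatment of forms (1)--(5) is correct, and so is your handling of $(2,2)$, $(3,2)$, $(2,3)$ once the source size is known. The gap is in excluding $\min\{m,n\}\ge3$.

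Restricting $\ker\gamma$ to a $3\times3$ corner correctly gives a subspace $K\subseteq\M_3(\F)$ with $\dim K\ge6$. The claim that every such $K$ contains a rank-two matrix is the nontrivial point, and neither justification you offer proves it. The bound $\dim\le3$ for spaces of upper rank one only shows that $K$ contains a matrix of rank at least two, and in $\M_3(\F)$ that matrix may be invertible. Intersecting with $2\times2$ corners only gives subspaces of $K$ of dimension $\ge 4+6-9=1$, which carries no information. This is the case the paper must treat separately as $(m,n)=(3,3)$. De Seguins Pazzis's codimension bound does not cover it, since codimension $3$ is not less than $\max\{m,n\}=3$.

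Either of two arguments closes it.
\begin{enumerate}
\item[(a)] The paper's argument. The singular elements of $K$ have rank at most one, and they form a subspace $V$: a sum of two of them has rank at most two and lies in $K$, hence has rank at most one. So $\dim V\le3$. Also $V\ne0$, since otherwise $X\mapsto Xv$ would inject $K$ into $\F^3$ for any $v\ne0$. Pick $A\in V$ and $v$ with $Av\ne0$. The kernel of $X\mapsto Xv$ on $K$ has dimension $\ge3$ and consists of singular matrices, so it lies in $V$. But there it is a proper subspace, of dimension $\le2$, a contradiction.
\item[(b)] Your strip idea, using $2\times3$ strips instead of $2\times2$ corners. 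For every $2$-dimensional $W\subseteq\F^3$, the space $K\cap\{A:\col A\subseteq W\}$ has dimension $\ge3$ and upper rank one. Hence it equals $\{u_Ww^\top: w\in\F^3\}$ for some nonzero $u_W\in W$. No line lies in every plane, so some $u_W,u_{W'}$ are independent. Then $u_We_1^\top+u_{W'}e_2^\top\in K$ has rank two.
\end{enumerate}
With either patch, your corner reduction plus the $\M_{2,n}$ dimension count gives a self-contained argument. It replaces the paper's appeal to de Seguins Pazzis's result for $\max\{m,n\}\ge4$.

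There are also two slips. First, the $(2,2)$ case is not ``excluded''; it yields 6a). Second, if $\ker\gamma=\{wv^\top: w\in\F^3\}$, then $\gamma$ factors through $A\mapsto Aw_0$ with $v^\top w_0=0$, not through $A\mapsto Av$. The latter fails to kill $wv^\top$ unless $v^\top v=0$.
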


	\begin{remark} By \cite[Corollary 4]{PAZZIS2011}, every subspace  of $\M_{m,n}(\F)$ with codimension less than $\max\{m,n\}$ contains at least one rank $r$ matrix for every $r\in \{1, \ldots, \min\{m,n\}\}.$  
		\begin{enumerate}
			\item Consider the form $\phi(A)=\delta (A) V^\top $ in Corollary \ref{thm: Mmn rank 2 preserver} (3). Given a rank-one matrix $A_0\in\M_{p,2}(\F)$, we have 
			$$\delta^{-1}(\F A_0) \subseteq \{A\in \M_{m,n}(\F)\mid \rank (\delta(A))\le 1\}.$$
			Moreover,   the   linear map $\delta': \M_{m,n}(\F)\to \M_{p,2}(\F)/(\F A_0)$, given by
			$\delta'(A)=\delta(A)+\F A_0$, satisfies that $\ker \delta'=\delta^{-1}(\F A_0)$ and 
			\[
			\dim (\M_{m,n}(\F)) - \dim \ker \delta' = \dim \Ima\delta' \le \dim \M_{p,2}(\F) -\dim (\F A_0)=2p-1.
			\]
			Thus the codimension of $\delta^{-1}(\F A_0)$ in $\M_{m,n}(\F)$ is no more than $2p-1.$ 
			When
			$$2p-1<\max\{m,n\},$$
			\cite[Corollary 4]{PAZZIS2011} implies that the set  
			$\{A\in \M_{m,n}(\F)\mid \rank (\delta(A))\le 1\}$
			contains matrices of every rank up to $\min\{m,n\};$ thus 
			the form in Corollary \ref{thm: Mmn rank 2 preserver} (3) is impossible. 
			\item Analogously,  the form $\phi(A)=U\eta(A)^\top $ in Corollary \ref{thm: Mmn rank 2 preserver} (4) is impossible when
			$$2q-1<\max\{m,n\};$$
			
			\item The form $\phi(A)=u\rho(A)^\top+\tau(A) v^\top$ in Corollary \ref{thm: Mmn rank 2 preserver} (5) is impossible when
			$$
			\min\{p,q\}-1 <\max\{m,n\}.
			$$
		\end{enumerate}
		
	\end{remark}

	\subsection{Examples}\label{sec:example}
	
	\begin{example} \label{M-rank-two-preserver-examples}
		The following examples show that the forms in Corollary \ref{thm: Mmn rank 2 preserver} (3), (4), and (5) occur and can be distinct from the other forms. 
		Let $m=n=2$ and $p=q=4.$  
		
		\begin{enumerate}
			\item Let $\phi:\M_2(\F)\to \M_4(\F)$ such that  $\phi(A)=\delta (A) V^\top $, where $V\in \M_{4,2}(\F)$ has full column rank 2 and
			\begin{equation} \label{gen-rank-two-preserver-ex1}
				\delta: \M_2(\F)\to \M_{4,2}(\F),\qquad \delta\big(\mtx{a_{11} &a_{12}\\a_{21} &a_{22}} \big) =\mtx{a_{11} &a_{22}\\ a_{12} &a_{11}\\ a_{21} &a_{12}\\a_{22} &a_{21}}.
			\end{equation}
			The set  $$\{A\in \M_{2}(\F)\mid \rank (\delta(A))\le 1\}= \left\{a\mtx{1 &c\\c^2 &c^3}\mid a, c\in\F,\ c^4=1\right \}$$
			does not contain any matrix of rank 2. So $\phi$ preserves rank-two matrices and it takes the form of Corollary \ref{thm: Mmn rank 2 preserver} (3).
			
			The map $\phi$ sends the rank-one matrix $\mtx{1&0\\0&0}$ to a rank-two matrix; so $\phi$ cannot have  the form of Corollary \ref{thm: Mmn rank 2 preserver} (1) or (2). 
			Since the span of the columns  of $\phi(A)$ 
			over all $A\in \M_2(\F)$ is $\F^4$, $\phi$ cannot take the form of Corollary \ref{thm: Mmn rank 2 preserver} (4).
			Suppose, finally, that $\phi$ also has the form in  Corollary \ref{thm: Mmn rank 2 preserver} (5): 
			$$\phi(A)=\delta(A)V^\top =u\rho(A)^\top+\tau(A) v^\top$$ 
			where $u,v\in \F^4$   and   $\rho,\tau:\M_{2}(\F)\to \F^4$ are linear maps. Then there exists a vector $w\in\F^4$ such that
			$v^\top w=0$ but $V^\top w=\mtx{c_1\\c_2}\ne 0.$ For every $A=\mtx{a_{11} &a_{12}\\a_{21} &a_{22}}$ we get
			$$\phi(A)w=\delta(A) V^\top w=\mtx{c_1 a_{11} + c_2 a_{22}\\ c_1  a_{12} + c_2 a_{11}\\ c_1  a_{21} + c_2 a_{12}\\ c_1 a_{22} + c_2 a_{21}}
			=(u\rho(A)^\top+\tau(A) v^\top) w= (\rho(A)^\top w) u \in\F u
			$$
			This is impossible, so $\phi$ cannot have the form in  Corollary \ref{thm: Mmn rank 2 preserver} (5).
			
			\item Let $\phi':\M_2(\F)\to \M_4(\F)$ such that $\phi'(A)=\phi(A)^\top$ where $\phi$ is given above. Then  $\phi'$ preserves rank-two matrices and it takes the form of Corollary \ref{thm: Mmn rank 2 preserver} (4). Moreover, $\phi'$ cannot be expressed in forms (1), (2), (3), or (5). Both $\phi$ and $\phi'$ have four-dimensional images, so neither has form (6). 
			
			\item Let $\phi'':\M_2(\F)\to \M_4(\F)$  be given by
			\begin{equation} 
				\begin{aligned}
					\label{gen-rank-two-preserver-ex3}
					\phi''\big(\mtx{a_{11} &a_{12}\\a_{21} &a_{22}} \big) &= \mtx{a_{11}+a_{22} &a_{12} &a_{21} &a_{22}\\
						a_{11} &0&0&0\\ a_{12} &0&0&0\\ a_{21} &0&0&0}
					\\
					&=\mtx{1\\0\\0\\0}\mtx{a_{11} &a_{12} &a_{21} &a_{22}} +\mtx{a_{22}\\a_{11}\\a_{12} \\a_{21}}\mtx{1&0&0&0}.
				\end{aligned}
			\end{equation}
			We can check that $\phi''$ is a rank 2 preserver that takes the form of  Corollary \ref{thm: Mmn rank 2 preserver} (5) and $\phi''$ does not preserve any other rank. 
			
			Since $\rank \phi''\big(\mtx{0&1\\0&0}\big)=2$, $\phi''$ cannot have the form of  Corollary \ref{thm: Mmn rank 2 preserver} (1) or (2).
			If $\phi''$ can be expressed by  Corollary \ref{thm: Mmn rank 2 preserver} (3), namely $\phi''(A)=\delta (A) V^\top $, then the row space of $\phi''(A)$ for any $A\in \M_2(\F)$ is always contained in the 2-dimensional row space of $V^\top$. However, \eqref{gen-rank-two-preserver-ex3} shows that 
			the span of   rows  (resp.   columns) of $\phi''(A)$ over all $A\in \M_2(\F)$ is isomorphic to $\F^4$, which leads to a contradiction. Hence 
			$\phi''$ cannot take the form of  Corollary \ref{thm: Mmn rank 2 preserver} (3) (resp. Corollary \ref{thm: Mmn rank 2 preserver} (4)). 
			
		\end{enumerate}
	\end{example}
	
	\begin{example}
		A linear map that preserves a lower rank need not preserve a higher rank. 
		Consider a rank-two preserving linear map $\phi$ in  Corollary \ref{thm: Mmn rank 2 preserver} (3)--(6). The range of $\phi$ always has
		the upper rank  2. When $\min\{m,n\}\ge 3$,  $\phi$ is not a rank-$r$ preserver for any $r\in\{3,\ldots,\min\{m,n\}\}$. 
	\end{example}
	
	\begin{example}
		A linear map that preserves a higher rank need not preserve a lower rank either.  
		Let $p\ge 2 mn$  and consider a linear map  $\phi:  \M_{m,n}(\F) \to \M_{p,q}(\F)$ of the form $\phi(A)=\delta (A) V^\top $, in which $V\in \M_{q,2}(\F)$ has rank 2  and $\delta:\M_{m,n}(\F)\to \M_{p,2}(\F)$ has the form
		$$\delta(A) = \mtx{ \operatorname{vec}(A) &0\\ 0 & \operatorname{vec}(A)\\0 &0}$$
		where $ \operatorname{vec}(A)$ is the vectorization of $A$. Then $\phi$ is a rank-two preserver, but the image $\phi(A)$ for any nonzero $A$ always has rank 2 so that $\phi$ is not a rank-one preserver.
		
		This example also shows that nonincrease at a higher rank need not imply nonincrease at a lower rank. 
	\end{example}
	
	\subsection{Auxiliary lemmas}
	
	\begin{lemma}\label{thm: block matrix rank ineq}
		The following inequality holds for block matrices: 
		\begin{equation}
			\rank \mtx{A&B\\C&0}\ge \rank \mtx{0&B\\C&0}.
		\end{equation}
		Equality holds if and only if  $A=BX+ YC$ for certain  matrices $X$ and $Y$. 
	\end{lemma}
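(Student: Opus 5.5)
We reduce everything to the block-diagonal matrix $\mtx{0&B\\C&0}$ by block row and column operations, which preserve rank. Write $B\in\M_{r,s}$ and $C\in\M_{t,u}$, so that $A\in\M_{r,u}$. First choose invertible $P_1,Q_1$ with $P_1BQ_1=\mtx{I_b&0\\0&0}$, where $b=\rank B$. Likewise choose invertible $P_2,Q_2$ with $P_2CQ_2=\mtx{I_c&0\\0&0}$, where $c=\rank C$. Multiplying $\mtx{A&B\\C&0}$ on the left by $\diag(P_1,P_2)$ and on the right by $\diag(Q_2,Q_1)$ changes neither rank. It replaces $A$ by $A'=P_1AQ_2$, and $\mtx{0&B\\C&0}$ has rank $b+c$.

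Partition $A'$ conformally as $\mtx{A_{11}&A_{12}\\A_{21}&A_{22}}$, where the first block row has $b$ rows (matching the identity in the $B$ block) and the first block column has $c$ columns (matching the identity in the $C$ block). Subtracting multiples of the rows carrying $I_b$ clears $A_{11}$ and $A_{12}$. Subtracting multiples of the columns carrying $I_c$ clears $A_{21}$. The rows $A_{11}$ and $A_{12}$ lie in the rows meeting $I_b$, and $A_{21}$ lies in the columns meeting $I_c$. These operations stay inside the block structure and leave $A_{22}$ untouched.

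The resulting matrix is, up to permutation, $I_b\oplus I_c\oplus A_{22}$ padded with zeros. Hence
\[
\rank\mtx{A&B\\C&0}=b+c+\rank A_{22}\ge b+c=\rank\mtx{0&B\\C&0},
\]
which proves the inequality. Equality holds exactly when $A_{22}=0$.

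Next we relate $A_{22}=0$ to the condition $A=BX+YC$. If $A=BX+YC$, the block operations $\mtx{I&-Y\\0&I}$ on the left and $\mtx{I&0\\-X&I}$ on the right convert $\mtx{A&B\\C&0}$ into $\mtx{0&B\\C&0}$, so the ranks are equal. Conversely, suppose $A_{22}=0$. Then $A'=\mtx{A_{11}&A_{12}\\A_{21}&0}$, and this equals $\mtx{I_b&0\\0&0}X'+Y'\mtx{I_c&0\\0&0}$. Here $X'$ has first block row $(A_{11},A_{12})$ padded by zeros, and $Y'$ has first block column $(0,A_{21})^\top$ padded by zeros. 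Substituting back $A=P_1^{-1}A'Q_2^{-1}$, $B=P_1^{-1}(P_1BQ_1)Q_1^{-1}$ and $C=P_2^{-1}(P_2CQ_2)Q_2^{-1}$ gives $A=BX+YC$ with $X=Q_1X'Q_2^{-1}$ and $Y=P_1^{-1}Y'P_2$.

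The only delicate point is the bookkeeping of which blocks the row and column eliminations can clear. In particular, one must check that clearing $A_{12}$ via the $B$-rows does not reintroduce entries elsewhere, since those rows have zeros in the $C$ region. No field assumption is needed.
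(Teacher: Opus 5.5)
Your argument is the paper's: normalize $B$ and $C$ by block-diagonal equivalences, read off $\rank\mtx{A&B\\C&0}=b+c+\rank A_{22}$, and identify equality with $A_{22}=0$, hence with $A=BX+YC$. Your explicit $X=Q_1X'Q_2^{-1}$, $Y=P_1^{-1}Y'P_2$ and the direct elimination $\mtx{I&-Y\\0&I}\mtx{A&B\\C&0}\mtx{I&0\\-X&I}=\mtx{0&B\\C&0}$ for the ``if'' direction spell out what the paper compresses into ``in these coordinates''.

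There is one slip to fix in the elimination step. $A_{11}$ and $A_{12}$ sit in the rows carrying $I_b$, so they are cleared by \emph{column} operations: subtract multiples of the columns carrying $I_b$, which vanish outside the $I_b$ block. $A_{21}$ sits in the columns carrying $I_c$, so it is cleared by \emph{row} operations using the rows carrying $I_c$, which vanish outside the $I_c$ block. As you wrote it, with rows and columns interchanged, the operations cannot clear these blocks, since subtracting multiples of some rows from other rows never changes the rows being used. Your closing ``delicate point'' should likewise say that the $I_b$-columns are zero outside $I_b$, not that the $B$-rows are zero in the $C$ region.
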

	
	\begin{proof}
		Let $r=\rank B$ and $s=\rank C$. By multiplying on the left and
		right by suitable invertible block diagonal matrices, we may assume
		\[
		B=\begin{pmatrix}I_r&0\\0&0\end{pmatrix},
		\qquad
		C=\begin{pmatrix}I_s&0\\0&0\end{pmatrix}.
		\]
		Partition $A$ conformally as
		$A=\left(\begin{smallmatrix}A_{11}&A_{12}\\A_{21}&A_{22}\end{smallmatrix}\right)$,
		where $A_{11}$ has size $r\times s$. Elementary row and column
		operations give
		\[
		\rank\begin{pmatrix}A&B\\ C&0\end{pmatrix}
		=r+s+\rank A_{22}\ge r+s= \rank\begin{pmatrix}0&B\\ C&0\end{pmatrix}. 
		\]
		Equality holds exactly when $A_{22}=0$. In these coordinates this is
		equivalent to $A=BX+YC$ for suitable $X$ and $Y$. Since all the
		reductions were made by invertible block diagonal matrices, the same
		criterion holds in the original coordinates.
	\end{proof}
	
	We will use the following special case of Lemma \ref{thm: block matrix rank ineq}.
	
	\begin{corollary}\label{thm: rank ineq rank 2}
		Suppose matrices $B\ne 0$ and $C\ne 0$, and $\rank \mtx{A&B\\C&0}\le 2$. Then
		there exist nonzero vectors $u_1, u_2, v_1, v_2$ and vectors $p, q$   such that
		\begin{equation}
			B=u_1v_2^\top ,\qquad C=u_2v_1^\top ,\qquad A=u_1p^\top +qv_1^\top .
		\end{equation}
	\end{corollary}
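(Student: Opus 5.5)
This is a direct specialization of Lemma~\ref{thm: block matrix rank ineq}. The lemma gives the chain
\[
2\ge \rank\mtx{A&B\\C&0}\ge\rank\mtx{0&B\\C&0}=\rank B+\rank C .
\]
Since $B\ne0$ and $C\ne0$, each has rank at least one, so $\rank B=\rank C=1$. Moreover, $\rank\mtx{A&B\\C&0}=2$, so equality holds in the inequality of Lemma~\ref{thm: block matrix rank ineq}.

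Because $B$ and $C$ have rank one, we can factor them as $B=u_1v_2^\top$ and $C=u_2v_1^\top$ with nonzero vectors $u_1,u_2,v_1,v_2$ of the appropriate sizes. The equality case of Lemma~\ref{thm: block matrix rank ineq} then supplies matrices $X,Y$ with $A=BX+YC$. Substituting the factorizations gives
\[
A=u_1(v_2^\top X)+(Yu_2)v_1^\top .
\]
Setting $p^\top=v_2^\top X$, that is $p=X^\top v_2$, and $q=Yu_2$ yields $A=u_1p^\top+qv_1^\top$, which is the required form.

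There is no real obstacle here. The only points to check are that the dimensions in the block matrix are compatible, so that $X$ and $Y$ have sizes making $p$ and $q$ vectors of the correct lengths, and that the equality case of the lemma really applies. It does, because the rank is squeezed between $2$ and $\rank B+\rank C\ge2$.
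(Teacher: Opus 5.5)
Your proof is correct and matches the paper's argument step for step. The squeeze $2\ge\rank B+\rank C\ge 2$ forces $\rank B=\rank C=1$ and equality in Lemma~\ref{thm: block matrix rank ineq}, and the equality case $A=BX+YC$ with $p=X^\top v_2$ and $q=Yu_2$ is exactly how the paper concludes.
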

	
	\begin{proof}
		Since
		\[
		\rank\mtx{0&B\\C&0}=\rank B+\rank C,
		\]
		Lemma~\ref{thm: block matrix rank ineq} and the assumptions $B,C\ne0$
		give $\rank B=\rank C=1$ and equality in the block rank inequality.
		Write $B=u_1v_2^\top$ and $C=u_2v_1^\top$. The equality criterion gives
		$A=BX+YC=u_1p^\top+qv_1^\top$, where $p=X^\top v_2$ and $q=Yu_2$.
	\end{proof}
	
	\begin{lemma}\label{thm: rank one outside proper subspaces}
		Let $\F$ be a field with $|\F|>k$. Let $S_1,\ldots,S_k$ be proper subspaces of $\M_{m,n}(\F)$.
		Then there is a rank-one matrix $A\in \M_{m,n}(\F)$ such that $A\not\in \bigcup_{i=1}^{k} S_i$. 
	\end{lemma}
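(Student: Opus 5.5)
The plan is to parametrize rank-one matrices as $A=xy^\top$ with $x\in\F^m$ and $y\in\F^n$. Each proper subspace $S_i$ lies in a hyperplane $H_i=\{A:\operatorname{tr}(C_i^\top A)=0\}$ for some nonzero $C_i\in\M_{m,n}(\F)$. It therefore suffices to find $x,y$ with $x,y\ne0$ and
\[
x^\top C_i y\ne0\qquad(i=1,\ldots,k),
\]
since then $xy^\top$ has rank one and lies outside every $H_i\supseteq S_i$.

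Consider the polynomial
\[
g(x,y)=\prod_{i=1}^k x^\top C_i y.
\]
Each factor $x^\top C_iy$ is a nonzero bilinear form because $C_i\ne0$, so $g$ is a nonzero polynomial, as a product of nonzero polynomials in the integral domain $\F[x,y]$. Every variable $x_a$ or $y_b$ appears with degree at most one in each factor, hence with degree at most $k$ in $g$. Since $|\F|>k$, the elementary polynomial fact recalled at the beginning of Section~\ref{sec:rank-relations} (a polynomial of degree at most $d$ in each variable that vanishes on all of $\F^N$, with $|\F|>d$, is zero) yields a point $(x,y)$ with $g(x,y)\ne0$. At that point every factor is nonzero, which forces $x\ne0$ and $y\ne0$. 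Hence $A=xy^\top$ has rank exactly one and satisfies $A\notin H_i$ for all $i$, so $A\notin\bigcup_i S_i$.

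The only point needing care is the degree count. We need the degree in each individual variable to be at most $k$, not the total degree $2k$, so that the hypothesis $|\F|>k$ suffices. This holds because each bilinear factor is linear in each variable separately. No step is a real obstacle.
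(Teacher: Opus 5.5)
Your proof is correct and follows the paper's argument essentially step for step. The paper also picks a nonzero functional vanishing on each $S_i$, evaluates it on $uv^\top$, and applies the polynomial fact from Section~\ref{sec:rank-relations} to the product, using the bound of $k$ on the degree in each variable.
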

	
	\begin{proof}
		For each $i$, choose a nonzero linear functional $\lambda_i$ on
		$\M_{m,n}(\F)$ that vanishes on $S_i$. The polynomial
		\[
		P_i(u,v)=\lambda_i(uv^\top),
		\qquad u\in\F^m,\quad v\in\F^n,
		\]
		is nonzero because rank-one matrices span $\M_{m,n}(\F)$. Hence
		$P=\prod_{i=1}^kP_i$ is a nonzero polynomial of degree at most $k$ in each scalar entry of $u$ and $v$. Since $|\F|>k$, the polynomial fact from Section~\ref{sec:rank-relations} shows that $P$ cannot vanish at every point of $\F^m\times\F^n$. Choose $(u,v)$ with $P(u,v)\ne0$. Then $u,v\ne0$
		and $uv^\top\notin S_i$ for every $i$.
	\end{proof}
	
	\begin{corollary}\label{thm: rank one basis outside}
		Let $\F$ be a field with $|\F|>k+1$. Let $S_1,\ldots,S_k$ be proper subspaces of $\M_{m,n}(\F)$.
		There is a basis $ \{A_{i}\mid i\in [mn]\}$ of  $\M_{m,n}(\F)$ such that each $A_{i}$ is a rank-one matrix and
		\begin{equation}\label{rank one basis outside subspaces}
			\{A_{i}\mid i\in [mn]\}\cap \left(\bigcup_{i=1}^{k} S_i\right)=\emptyset.
		\end{equation}
	\end{corollary}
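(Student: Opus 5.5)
The plan is to build the basis one element at a time, applying Lemma~\ref{thm: rank one outside proper subspaces} at each step with one extra subspace added to the list. Set $N=mn$. Suppose rank-one matrices $A_1,\ldots,A_j$ have already been chosen, with $j<N$, so that they are linearly independent and none lies in $S_1\cup\cdots\cup S_k$. Let
\[
T_j=\span\{A_1,\ldots,A_j\}.
\]
Since $j<N$, $T_j$ is a proper subspace of $\M_{m,n}(\F)$ (for $j=0$ it is $\{0\}$, which is proper as long as $mn\ge1$).

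At the next step, apply Lemma~\ref{thm: rank one outside proper subspaces} to the $k+1$ proper subspaces $S_1,\ldots,S_k,T_j$. The lemma needs $|\F|>k+1$, which is exactly the hypothesis here. It gives a rank-one matrix $A_{j+1}$ lying outside every $S_i$ and outside $T_j$. Because $A_{j+1}\notin T_j$, the enlarged list $A_1,\ldots,A_{j+1}$ is still linearly independent. By construction no chosen element lies in $\bigcup_i S_i$.

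After $N$ steps we have $N=\dim\M_{m,n}(\F)$ linearly independent rank-one matrices. They therefore form a basis, and this basis satisfies \eqref{rank one basis outside subspaces}.

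There is no real obstacle. The only points to check are that each $T_j$ with $j<N$ is proper, so the lemma applies, and that adding $T_j$ raises the count of subspaces from $k$ to $k+1$. That increase is why the field bound is $|\F|>k+1$ rather than $|\F|>k$. One degenerate case deserves a remark: if some $S_i$ is the whole space the statement fails, but the hypothesis that each $S_i$ is proper excludes this.
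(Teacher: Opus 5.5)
Your proposal is correct and follows the paper's proof. Both choose the rank-one matrices one at a time, applying Lemma~\ref{thm: rank one outside proper subspaces} to $S_1,\ldots,S_k$ together with the proper subspace spanned by the matrices already chosen; that extra subspace is why the hypothesis is $|\F|>k+1$.
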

	
	\begin{proof}
		Choose the matrices successively. Having chosen linearly independent
		rank-one matrices $A_1,\ldots,A_{j-1}$, apply
		Lemma~\ref{thm: rank one outside proper subspaces} to
		\[
		S_1,\ldots,S_k,\quad \span\{A_1,\ldots,A_{j-1}\}.
		\]
		The additional subspace is proper when $j\le mn$, and the inequality $|\F|>k+1$ permits this application of the lemma. The resulting matrices
		$A_1,\ldots,A_{mn}$ form the required basis.
	\end{proof}
	
	\begin{lemma}\label{lem:cubic-obstruction}
		Let $u'\in\F^{p-2}$ and $v'\in\F^{q-2}$ be nonzero, and let
		$G:\M_{m,n}(\F)\to\M_2(\F)$ and
		$\delta,\eta:\M_{m,n}(\F)\to\F^2$ be linear. Suppose that
		\[
		\Phi(X)=\begin{pmatrix}
			G(X)&\delta(X){v'}^\top\\
			u'\eta(X)^\top&0
		\end{pmatrix}
		\]
		has rank at most two whenever $X$ has rank at most two. Then
		\begin{equation}\label{eq:cubic-identity}
			f(X):=\det\begin{pmatrix}G(X)&\delta(X)\\\eta(X)^\top&0\end{pmatrix}
			=\delta(X)^\top JG(X)J\eta(X)=0
			\qquad(X\in\M_{m,n}(\F)),
		\end{equation}
		where $J=\begin{pmatrix}0&-1\\1&0\end{pmatrix}$.
	\end{lemma}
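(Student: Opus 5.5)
The plan is to show first that $f$ vanishes on every matrix of rank at most two, then to upgrade this to $f\equiv0$ using the determinantal ideal and the Chow rank obstruction described in the introduction.

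\emph{Step 1: $f$ vanishes on low-rank inputs.} Pick indices $i$ and $j$ with $u'_i\ne0$ and $v'_j\ne0$. Take the $3\times3$ submatrix of $\Phi(X)$ formed by rows $1,2$ and row $2+i$, and by columns $1,2$ and column $2+j$. This submatrix is
\[
\begin{pmatrix}G(X)&v'_j\,\delta(X)\\ u'_i\,\eta(X)^\top&0\end{pmatrix},
\]
so its determinant equals $u'_iv'_j\,f(X)$. Hence $f(X)=0$ whenever $\rank X\le2$. If $\min\{m,n\}\le2$, every $X$ has rank at most two and we are done, so assume $m,n\ge3$.

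The second expression in \eqref{eq:cubic-identity} follows from expanding the bordered determinant. Write $JG(X)J$ out entrywise. This shows
\[
f=\sum_{a,b}\pm\,\delta_a\,G_{a'b'}\,\eta_b,
\]
a sum of four products of three linear forms in the entries of $X$. This structure is exactly what the Chow rank argument needs.

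\emph{Step 2: passage to the ideal of $3\times3$ minors.} The form $f$ has total degree $3$, and $|\F|\ge5>3$. By the polynomial fact from Section~\ref{sec:rank-relations} with $s=2$ and $d=3$, $f$ vanishes on all matrices of rank at most two over the algebraic closure $\K$. The ideal $I_3$ generated by the $3\times3$ minors of the generic $m\times n$ matrix is prime, hence radical (\cite[Theorem 2.10]{BrunsVetter1988}). So the Nullstellensatz gives $f\in I_3$. Since $f$ is homogeneous of degree $3$ and $I_3$ is generated in degree $3$, comparing degree-$3$ components gives
\[
f=\sum_{I,J}c_{IJ}\det X_{IJ},
\]
with constants $c_{IJ}\in\K$, where $I$ and $J$ run over $3$-subsets of rows and columns.

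\emph{Step 3: the Chow rank obstruction.} Suppose $f\ne0$, and fix $(I,J)$ with $c_{IJ}\ne0$. Substitute $X=PYQ$, where $Y$ is a generic $3\times3$ matrix. Here $P\in\M_{m,3}$ is the coordinate embedding onto the rows in $I$, and $Q\in\M_{3,n}$ is the coordinate embedding onto the columns in $J$. Then $\det (PYQ)_{I'J'}=0$ unless $(I',J')=(I,J)$. Therefore $f(PYQ)=c_{IJ}\det Y$.

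On the other hand, linear substitution carries linear forms to linear forms. So $f(PYQ)$ is still a sum of four products of three linear forms in the entries of $Y$. This would give the generic $3\times3$ determinant Chow rank at most four over $\K$. That contradicts the lower bound of five cited from \cite{IltenTeitler2016,IltenSuess2020}. Hence $f=0$ as a polynomial, and in particular $f(X)=0$ for all $X\in\M_{m,n}(\F)$.

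\emph{Main obstacle.} I expect the main delicate point to be Step 2: the passage from pointwise vanishing over $\F$ to ideal membership. This needs the field-size extension argument, together with primeness of the determinantal ideal in arbitrary characteristic. After that, isolating a single minor by a coordinate substitution is routine.
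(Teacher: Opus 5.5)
Your proof is correct and follows the paper's argument essentially step for step. You take the $3\times3$ minor $u'_iv'_jf(X)$ of $\Phi(X)$, extend the vanishing to the algebraic closure via the polynomial fact with $s=2$, $d=3$, and write $f$ as a $\K$-linear combination of $3\times3$ minors via the determinantal ideal. You then isolate a single minor by zeroing every entry outside $X[I;J]$, which contradicts the Chow-rank lower bound of five. Your Nullstellensatz-plus-primeness phrasing of Step~2 and the substitution $X=PYQ$ are only more explicit versions of what the paper does.
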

	
	\begin{proof}
		Choose nonzero entries $u'_i$ and $v'_j$. When $\rank X\le2$, the scalar $u'_iv'_j f(X)$ is a $3\times3$ minor of $\Phi(X)$, so $f(X)=0$. Let $\K$ be an algebraic closure of $\F$. The polynomial $f$ is homogeneous of degree three, unless it is zero. Since $|\F|\ge5$, the factorization argument in Section~\ref{sec:rank-relations}, with $s=2$ and $d=3$, shows that $f$ vanishes on all matrices of rank at most two over $\K$ as well. If $\min\{m,n\}\le2$, this gives $f=0$ as a polynomial.
		
		Assume $m,n\ge3$. Over $\K$, the ideal of the rank-at-most-two determinantal variety is generated by the $3\times3$ minors (see for example \cite[Theorem 2.10]{BrunsVetter1988}). Its degree-three part is therefore their $\K$-linear span. Hence
		\[
		f(X)=\sum_{\substack{|I|=3\\|L|=3}}c_{I,L}\det X[I;L],
		\qquad c_{I,L}\in \K.
		\]
		On the other hand, writing $G=(g_{ab})$, $\delta=(\delta_1,\delta_2)^\top$, and $\eta=(\eta_1,\eta_2)^\top$, we have
		\[
		f=-\delta_1g_{22}\eta_1+\delta_1g_{21}\eta_2
		+\delta_2g_{12}\eta_1-\delta_2g_{11}\eta_2.
		\]
		Thus $f$ is a sum of at most four products of three linear forms. If $c_{I,L}\ne0$, set every entry outside $X[I;L]$ equal to zero. Every other $3\times3$ minor then vanishes, so, after division by $c_{I,L}$, the generic $3\times3$ determinant would be a sum of at most four products of linear forms over $\K$. This contradicts its Chow-rank lower bound of five, valid in every characteristic (cf.   \cite{IltenTeitler2016, IltenSuess2020}). Thus every $c_{I,L}$ is zero and $f=0$.
		
		For completeness, the exact Chow rank is five over every field. The lower bound just used combines with the characteristic-free five-term decomposition in \cite[Section~2]{KrishnaMakam2021}. Equivalently, for nine independent variables one may verify directly that
		\[
		\begin{aligned}
			\det\begin{pmatrix}a&b&c\\d&e&f\\g&h&i\end{pmatrix}
			={}&(b+c)dh-(a+c)eg-b(d+f)(h+i)\\
			&+(b-a)f(g+h+i)+a(e+f)(g+i).
		\end{aligned}
		\]
		The identity has integral coefficients, so it is valid in characteristic two as well. Only the Chow-rank lower bound is needed for the obstruction above.
	\end{proof}
	
	\begin{lemma}\label{lem:binary-normal-form}
		Let $G:\M_{m,n}(\F)\to\M_2(\F)$ and
		$\delta,\eta:\M_{m,n}(\F)\to\F^2$ be linear, with the two coordinates of each of $\delta$ and $\eta$ linearly independent. Suppose
		\[
		\delta(X)^\top JG(X)J\eta(X)=0
		\qquad(X\in\M_{m,n}(\F)),
		\quad J=\begin{pmatrix}0&-1\\1&0\end{pmatrix}.
		\]
		If $\span\{\delta_1,\delta_2\}\ne\span\{\eta_1,\eta_2\}$, there are constant vectors $a,b\in\F^2$ such that
		\begin{equation}\label{eq:normal-form-distinct}
			G(X)=\delta(X)a^\top+b\eta(X)^\top.
		\end{equation}
		If the two spans agree, write $\eta=S\delta$ for $S\in\GL_2(\F)$. Then there are constant vectors $a,b\in\F^2$ and a linear functional $\lambda$ such that
		\begin{equation}\label{eq:normal-form-equal}
			G(X)=\delta(X)a^\top+b\eta(X)^\top
			+\lambda(X)S^{-1}J.
		\end{equation}
	\end{lemma}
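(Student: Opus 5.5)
The plan is to read the hypothesis as a polynomial identity among linear forms and factor it twice using the Koszul relation for two independent linear forms. The coordinates $\delta_1,\delta_2,\eta_1,\eta_2$ and the entries $g_{ab}$ of $G$ are linear forms on $\M_{m,n}(\F)$, that is, elements of the polynomial ring $R=\F[x_{ij}]$. The expression $\delta^\top JGJ\eta$ has degree at most three in each variable, and $|\F|\ge5$. So by the polynomial fact at the beginning of Section~\ref{sec:rank-relations}, vanishing at every point gives the identity $\delta^\top JGJ\eta=0$ in $R$.

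Put $x=J^\top\delta=(\delta_2,-\delta_1)^\top$ and $y=J\eta=(-\eta_2,\eta_1)^\top$, so the identity reads $\sum_a x_ah_a=0$ with $h_a=\sum_b g_{ab}y_b$. I will use the following elementary fact repeatedly: if $\ell_1,\ell_2$ are independent linear forms and $\ell_1P_1+\ell_2P_2=0$ in $R$, then $P_1=s\ell_2$ and $P_2=-s\ell_1$ for some $s\in R$. To prove it, change variables so that $\ell_1,\ell_2$ are coordinates; then $\ell_1\mid P_2$ because $R$ is a UFD. Also, $R/(\ell_1,\ell_2)$ is a polynomial ring, hence a domain.

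\emph{Distinct spans.} From $x_1h_1+x_2h_2=0$ I get $h_1=tx_2$ and $h_2=-tx_1$ with $t$ linear. Next I show $t\in\span\{y_1,y_2\}$ by working modulo $(y_1,y_2)$, where both $h_a$ vanish. The spans differ, so some $x_a\notin\span\{y_1,y_2\}$, which makes $x_a$ nonzero in the domain $R/(y_1,y_2)$. Then $tx_a\equiv0$ forces $t\equiv0$, that is, $t=\alpha y_1+\beta y_2$. Substituting gives $(g_{11}-\alpha x_2)y_1+(g_{12}-\beta x_2)y_2=0$. The Koszul fact, now in degree $0$ for the cofactor, gives $(g_{11},g_{12})=x_2(\alpha,\beta)+s_1(y_2,-y_1)$ with $s_1\in\F$; row $2$ is handled the same way. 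Since $(x_2,-x_1)=-\delta^\top$ and $(y_2,-y_1)=\eta^\top$, this is exactly $G=\delta a^\top+b\eta^\top$ with $a=-(\alpha,\beta)^\top$ and $b=(s_1,s_2)^\top$.

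\emph{Equal spans.} Now $x=Cy$ for a constant invertible $C$. The same computation gives $h_1=tx_2$ and $h_2=-tx_1$, but $t$ is now an arbitrary linear form. Writing $x_2=\mu_1y_1+\mu_2y_2$ and repeating the Koszul step, I get
\[
G=b\eta^\top+tK,\qquad K=\begin{pmatrix}\mu_1&\mu_2\\-\nu_1&-\nu_2\end{pmatrix},
\]
where $x_1=\nu_1y_1+\nu_2y_2$. I expect $K$ to be a scalar multiple of $S^{-1}J$. To check this, note that $x^\top Ky\equiv0$ identically, since $t\neq0$ may be assumed. Substituting $x=J^\top\delta$ and $y=JS\delta$, this says that $J KJS$ is alternating, which pins $K$ down to a multiple of $S^{-1}J$ up to the identification of constants.

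Then I split $t=\lambda+c_1\delta_1+c_2\delta_2$ with $\lambda$ taken in a complement of $\span\{\delta_1,\delta_2\}$. The $\lambda$ part gives the term $\lambda(X)S^{-1}J$ in \eqref{eq:normal-form-equal}.

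The step I expect to be the main obstacle is absorbing the remaining part, $(c_1\delta_1+c_2\delta_2)S^{-1}J$, into $\delta a^\top+b\eta^\top$. I would first try a dimension count in the two-variable ring $\F[\delta_1,\delta_2]$. The matrices $G$ with entries in $\span\{\delta_1,\delta_2\}$ form an $8$-dimensional space. The map $G\mapsto\delta^\top JGJS\delta$ lands in the $4$-dimensional space of binary cubics, and I would check that it is onto. Then its kernel is $4$-dimensional. The family $\delta a^\top+b\eta^\top$ lies in this kernel, and the assignment $(a,b)\mapsto\delta a^\top+b\eta^\top$ is injective, because $\delta a^\top$ has a varying column space while $b\eta^\top$ has a fixed one. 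Hence this family is the whole kernel. Since $\delta_iS^{-1}J$ lies in the kernel, it is of the form $\delta a^\top+b\eta^\top$. Failing that, I would verify $\delta_iS^{-1}J\in\{\delta a^\top+b\eta^\top\}$ directly, using $\eta=S\delta$ and the identity $M J M^\top=(\det M)J$ for $2\times2$ matrices $M$.
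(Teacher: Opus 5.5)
Your proof is correct. In the distinct-span case it is essentially the paper's argument. Your relation $Gy=-t\delta$ is the paper's $GJ\eta=\delta\ell$ with $\ell=-t$. Your domain $R/(y_1,y_2)$ is the paper's $\F[x_{ij}]/\mathfrak a$. Your row-by-row Koszul step is the paper's deduction $T=b\eta^\top$.

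In the equal-span case you take a different and shorter route. The paper works as follows:
\begin{itemize}
\item it restricts to $\ker\delta$ and compares the coefficient of $t$ at $X+tZ$ to get $G(Z)\in\F S^{-1}J$;
\item it extends $\lambda_0$ to $\lambda$ and factors $G-\lambda S^{-1}J$ through $\delta$;
\item it solves $d^\top J\widehat L(d)Jd=0$ by comparing cubic coefficients, arranged so as not to divide by $2$.
\end{itemize}
You notice instead that the first Koszul step never used the spans. When $x=Cy$, the second step still works with $t$ as a coefficient. So both cases are handled by the same two applications of unique factorization, and characteristic two needs no special care. In fact $\delta=Jx$ and $y=JSJx$ give $C=JS^{-1}J$, hence $K=J^\top C=S^{-1}J$ exactly. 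This one-line computation can replace your alternating-matrix check.

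The paper's route gives a normal form parallel to the distinct-span case and a direct description of $\lambda$ on $\ker\delta$. Yours is more uniform and yields a sharper normal form, as the next point shows.

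The step you flag as the main obstacle is not needed. The lemma asks only for some linear functional $\lambda$; it does not require $\lambda$ to avoid $\span\{\delta_1,\delta_2\}$. From $G=b\eta^\top+tS^{-1}J$ you may take $a=0$ and $\lambda=t$, and \eqref{eq:normal-form-equal} follows at once. So in the equal-span case the term $\delta a^\top$ is redundant.

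Your dimension count is still correct. The map to binary cubics is onto, its kernel is $4$-dimensional, and $(a,b)\mapsto\delta a^\top+b\eta^\top$ is injective into that kernel. It simply proves more than the statement requires.
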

	
	\begin{proof}
		The assumed pointwise cubic identity is a polynomial identity because $|\F|>3$, by the polynomial fact in Section~\ref{sec:rank-relations}. Regard the coordinates of $\delta$, $\eta$, and $G$ as linear forms in $\F[x_{ij}]$, and set
		$R=GJ\eta=(R_1,R_2)^\top$. The assumed identity gives
		$\delta_2R_1-\delta_1R_2=0$. Since $\delta_1$ and $\delta_2$ are relatively prime, there is a linear form $\ell$ with
		\begin{equation}\label{eq:R-delta-ell}
			R=\delta\ell.
		\end{equation}
		Set $\mathfrak a=(\eta_1,\eta_2)$. Both components of $R$ lie in $\mathfrak a$, so $\delta_1\ell,\delta_2\ell\in\mathfrak a$. The quotient $\F[x_{ij}]/\mathfrak a$ is an integral domain.
		
		If the coordinate spans differ and $\ell\notin\mathfrak a$, then $\delta_1,\delta_2\in\mathfrak a$, forcing the two spans to agree. Thus $\ell\in\mathfrak a$, and for a constant vector $a$ we may write $\ell=a^\top J\eta$. By \eqref{eq:R-delta-ell}, the linear map $T=G-\delta a^\top$ satisfies $TJ\eta=0$. Each row $(t_1,t_2)$ of $T$ therefore satisfies $-t_1\eta_2+t_2\eta_1=0$. Since $\eta_1,\eta_2$ are relatively prime linear forms, each row is a constant multiple of $\eta^\top$. Hence $T=b\eta^\top$, proving \eqref{eq:normal-form-distinct}.
		
		Now suppose $\eta=S\delta$. If $Z\in\ker\delta$, apply the cubic identity to $X+tZ$ and compare the coefficient of $t$. Since $\delta$ is onto $\F^2$, this gives
		\[
		d^\top JG(Z)JSd=0\qquad(d\in\F^2).
		\]
		Thus $JG(Z)JS$ is alternating, including in characteristic two, so $G(Z)=\lambda_0(Z)S^{-1}J$ for a scalar $\lambda_0(Z)$. Extend the resulting linear functional $\lambda_0$ on $\ker\delta$ to a linear functional $\lambda$ on $\M_{m,n}(\F)$. Then $T=G-\lambda S^{-1}J$ vanishes on $\ker\delta$ and factors as $T(X)=L(\delta(X))$ for a linear map $L:\F^2\to\M_2(\F)$. The term $S^{-1}J$ itself satisfies the cubic identity, so
		\[
		d^\top JL(d)JSd=0\qquad(d\in\F^2).
		\]
		To make the final coefficient comparison explicit, put $\widehat L(d)=L(d)S^{-\top}$. The identity $JS=(\det S)S^{-\top}J$ gives
		\[
		d^\top J\widehat L(d)Jd=0.
		\]
		Write $\widehat L\binom{x}{y}=xA+yB$. Comparing the four cubic coefficients yields
		\[
		A_{22}=B_{11}=0,\qquad
		B_{22}=A_{12}+A_{21},\qquad
		A_{11}=B_{12}+B_{21}.
		\]
		Consequently $\widehat L(d)=da_0^\top+b_0d^\top$, where $a_0=(B_{21},A_{12})^\top$ and $b_0=(B_{12},A_{21})^\top$. Multiplication by $S^\top$ gives $L(d)=da^\top+b(Sd)^\top$, with $a=Sa_0$ and $b=b_0$. This proves \eqref{eq:normal-form-equal} without division by any integer.
	\end{proof}
	
	\subsection{Proof of the classification of rank-two nonincreasing maps}\label{subsec:rank-two-proof}
	
	Given $A\in\M_{m,n}(\F)$ and index sets $J\subseteq[m]$ and $K\subseteq [n],$  let $A[J;K]$ denote the submatrix with rows and columns indexed by $J$ and $K$, respectively.  Let $\col{A}$ (resp. $\row(A)$) denote the column space (resp. row space) of a matrix $A$. 
	
	We now prove the classification theorem. 
	
	\begin{proof}[Proof of Theorem \ref{thm: M_mn rank 2 nonincreasing}]
		The converse follows directly from the six displayed forms: forms (1) and (2) do not increase any rank, and forms (3)--(6) have range of upper rank at most two. We prove necessity by normalization, restrictions on the off-diagonal blocks, and a final compatibility analysis. The last part has four cases according to the fixed row or column factors of those blocks.
		
		\medskip\noindent\emph{Step 1: normalization.}
		We begin with the images of rank-one matrices. If $\rank(A)\le 1$ implies that $\rank(\phi(A))\le 1$ for  $A\in \M_{m,n}(\F)$, then
		$\phi$ takes one of the forms in 
		Theorem \ref{thm: rank decreasing one gen} (1)--(4), each of which is
		contained in forms (1)--(4) of the present theorem.
		
		Now assume that there is a rank-one matrix $A_0\in \M_{m,n}(\F) $ such that $\rank(\phi(A_0))=2$. There are nonsingular matrices $P\in \M_p(\F)$ and $Q\in \M_q(\F)$ such that $\phi(A_0)=P\mtx{I_2 &0\\0&0} Q$. The map $\wt{\phi}: \M_{m,n}(\F)\to \M_{p,q}(\F)$ given by $\wt{\phi}(C)=P^{-1}\phi(C)Q^{-1}$ is rank-two nonincreasing. 
		
		Suppose there is a rank-one matrix $B\in \M_{m,n}(\F)$ such that  $\wt{\phi}(B)=\mtx{B_{11}&B_{12}\\B_{21} &B_{22}}$ and $B_{22}\ne  0$, say
		the $(i,j)$ entry of $\wt{\phi}(B)$ is nonzero
		for some $2<i\le p$ and $2<j\le q$. 
		The $3\times3$ minor   of $$\wt{\phi}(xA_0+B) =x\wt{\phi}(A_0) +\wt{\phi}(B)=\mtx{xI_2+B_{11}&B_{12}\\B_{21} &B_{22}}$$ indexed by rows $1, 2,  i$ and columns $1, 2, j$ 
		is a quadratic polynomial of $x$.
		
		The quadratic has a nonzero leading coefficient and hence at most two roots. Since $|\F|\ge5$, we can choose a nonzero scalar $c\in\F$ such that 
		the above minor is nonzero for $x=c$, so that 
		$\rank (\wt{\phi}(cA_0+B)) \ge 3.$   However, $\rank (cA_0+B)\le \rank(A_0)+\rank(B)=2$, contradicting the hypothesis on $\wt{\phi}$. Hence $B_{22}=0$. 
		
		Since rank-one matrices span $\M_{m,n}(\F)$, the map $\wt{\phi}$ has the form:
		$$\wt{\phi}(B)=\mtx{\wt{\phi}_{11}(B) &\wt{\phi}_{12}(B)\\\wt{\phi}_{21}(B) &0}$$
		where each $\wt{\phi}_{ij}$ is a linear map. 
		
		We consider two special cases first:
		\begin{enumerate}
			\item  Suppose  $\wt{\phi}_{21}= 0$. Then $\wt{\phi}(B)=\mtx{\wt{\phi}_{11}(B) &\wt{\phi}_{12}(B)\\0 &0}$. 
			So   
			$$\wt{g} :\M_{m,n}(\F)\to \M_{q,2}(\F),\qquad \wt{g}(B)=\mtx{\wt{\phi}_{11}(B) &\wt{\phi}_{12}(B)}^\top  $$ is a linear map and
			$$        \wt{\phi}(B)  =  P^{-1}\phi(B)Q^{-1} =\mtx{I_2\\0} \wt{g}(B)^\top,\qquad 
			\phi(B)  =  \left(P\mtx{I_2\\0} \right) \left(Q^\top \wt{g}(B) \right)^\top  . $$
			Hence $\phi$ has the form in Theorem \ref{thm: M_mn rank 2 nonincreasing} (4).
			
			\item   Suppose  $\wt{\phi}_{12}= 0$. Similar arguments show that $\phi$ should take the form in Theorem \ref{thm: M_mn rank 2 nonincreasing} (3).
		\end{enumerate}
		
		\medskip\noindent\emph{Step 2: the off-diagonal maps.}
		From now on, assume that $\wt{\phi}_{21}\ne 0$ and $\wt{\phi}_{12}\ne 0$. 
		
		We claim that both $\wt{\phi}_{12}: \M_{m,n}(\F)\to \M_{2,q-2}(\F)$ and $\wt{\phi}_{21}: \M_{m,n}(\F)\to \M_{p-2,2}(\F)$ are rank-one nonincreasing maps. 
		Otherwise, without loss of generality, let
		$B'\in \M_{m,n}(\F)$ be a rank-one matrix such that  $\rank \wt{\phi}_{12}(B')\ge 2$;
		then $$\det(\wt{\phi}_{12}(B')[J;L])\ne 0$$ for a $2$-minor of $\wt{\phi}_{12}(B')$ indexed by $J$ and $L$. 
		Let $C$ be a rank-one matrix such that $\wt{\phi}_{21}(C)\ne 0$. We use the following observations.
		\begin{enumerate}
			\item  
			There is at most one value $x\in\F$ such that $$ \wt{\phi}_{21}(C+xB')= \wt{\phi}_{21}(C)+x\wt{\phi}_{21}(B')=0.$$
			\item 
			The minor $\det(\wt{\phi}_{12}(C+xB')[J;L])$ is a degree 2 polynomial of $x$; there are at most two values $x\in\F$ such that 
			$$\det(\wt{\phi}_{12}(C+xB')[J;L])=0.$$ 
			At most three scalar values are excluded in total. Since $|\F|\ge5$, we can choose a scalar $c\in\F$ such that
			\[
			\wt{\phi}_{21}(C+cB')\ne 0,\qquad
			\rank(\wt{\phi}_{12}(C+cB'))\ge 2.
			\]
			Now  $\rank (C+cB')\le \rank C+\rank B'=2$ but
			$$
			\rank \wt{\phi} (C+cB')\ge \rank \wt{\phi}_{21}(C+cB')+\rank \wt{\phi}_{12}(C+cB')\ge 3,
			$$
			contradicting the assumption that   $\wt{\phi}$  is rank-two nonincreasing. 
		\end{enumerate}
		
		Since $\wt{\phi}_{12}$ and $\wt{\phi}_{21}$ are rank-one nonincreasing, each has one of the four forms
		given in Theorem \ref{thm: rank decreasing one gen}. We claim that neither off-diagonal map can have an image containing a rank-two matrix. By Botta's theorem, it suffices to exclude multiplication and transpose-multiplication forms with such an image. 
		
		Suppose first, to the contrary, that $\wt{\phi}_{12}(X)=MXN$  where
		$M\in\M_{2,m}(\F)$ has full row rank $2$ and $N\in\M_{n,q-2}(\F)$ has $\rank N\ge 2.$  
		
		The set $\ker\wt{\phi}_{12}\cup \ker\wt{\phi}_{21}$ is a union of two proper subspaces of $\M_{m,n}(\F)$.
		Since $|\F|>2$, Lemma \ref{thm: rank one outside proper subspaces} gives a rank-one matrix 
		$A_1 \in \M_{m,n}(\F)$ not in $\ker\wt{\phi}_{12}\cup \ker\wt{\phi}_{21}.$ 
		By the rank-one nonincreasing property  of $\wt{\phi}_{12}$ and $\wt{\phi}_{21}$, 
		$$\rank \wt{\phi}_{12}(A_{1})=\rank \wt{\phi}_{21}(A_{1})=1.$$  
		There exists $X\in \M_{m,n}(\F)$ such that  $\wt{\phi}_{12}(X)=MXN$ has rank $2$. Hence 
		the sets
		\[
		\begin{aligned}
			\mathcal S_{\mathrm{col}} &=\{B\in \M_{m,n}(\F)\mid \col(\wt{\phi}_{12}(B))\subseteq\col(\wt{\phi}_{12}(A_1))\} \quad \text{and}
			\\
			\mathcal S_{\mathrm{row}} &= \{B\in \M_{m,n}(\F)\mid \row(\wt{\phi}_{12}(B))\subseteq\row(\wt{\phi}_{12}(A_1))\}
		\end{aligned}
		\] 
		are proper subspaces of $\M_{m,n}(\F).$ Since $|\F|>4$, we may apply Lemma \ref{thm: rank one outside proper subspaces} to the following four subspaces. Thus
		there is a rank-one matrix 
		$A_2 \in \M_{m,n}(\F)$ that is not in
		\[
		\ker\wt{\phi}_{12}\cup \ker\wt{\phi}_{21}\cup\mathcal S_{\mathrm{col}}\cup\mathcal S_{\mathrm{row}}.
		\]
		Write $A_1=u_1v_1^\top$ and $A_2=u_2v_2^\top$ for $u_1,u_2\in\F^{m}$ and $v_1,v_2\in\F^n.$
		Then 
		\[
		\wt{\phi}_{12}(A_{1})=(Mu_1)(N^\top v_1)^\top,\qquad \wt{\phi}_{12}(A_{2})=(Mu_2)(N^\top v_2)^\top,
		\]
		where $Mu_1$ and $Mu_2$ (resp. $N^\top v_1$ and $N^\top v_2$) are not collinear. 
		Choose $c\in\F\setminus\{0\}$ such that 
		\[
		\wt{\phi}_{21}(A_1+cA_2)=\wt{\phi}_{21}(A_1)+c\wt{\phi}_{21}(A_2)\ne 0.
		\]
		We have  $\rank(A_1+cA_2)=2.$ However, $\rank \wt{\phi}_{21}(A_1+cA_2)\ge 1$ and
		\[
		\rank \wt{\phi}_{12}(A_1+cA_2)=\rank((Mu_1)(N^\top v_1)^\top+c(Mu_2)(N^\top v_2)^\top) =2,  
		\]
		so that
		\[
		\rank \wt{\phi}(A_1+cA_2)\ge \rank \wt{\phi}_{12}(A_1+cA_2)+\rank \wt{\phi}_{21}(A_1+cA_2)\ge 3,
		\]
		which contradicts the assumption that   $\wt{\phi}$  is rank-two nonincreasing.         
		Thus $\wt{\phi}_{12}$ cannot have a multiplication form with rank-two image. 
		
		The same argument applies to $\wt{\phi}_{21}$ and, after transposition, to either off-diagonal map in multiplication form (2) of Theorem~\ref{thm: rank decreasing one gen}. 
		
		Therefore, each off-diagonal map has an image of upper rank at most one and can be written in form (3) or (4) of Theorem~\ref{thm: rank decreasing one gen}. 
		
		\medskip\noindent\emph{Step 3: compatibility of the off-diagonal forms.}
		If the two blocks have complementary fixed factors, we obtain form (5). Matching fixed columns or rows give forms (4) and (3), respectively. The remaining crossed orientation leads to forms (5) and (6).
		\begin{enumerate}
			\item  [(a)]  
			Suppose that
			$$\wt{\phi}_{12}(X)=u\eta_{12}(X)^\top ,\qquad \wt{\phi}_{21}(X)=\delta_{21}(X) v^\top ,$$
			for  nonzero  constant vectors $u, v\in\F^2$ and    linear maps $\eta_{12}:\M_{m,n}(\F)\to \F^{q-2}$  and 
			$\delta_{21}:\M_{m,n}(\F)\to \F^{p-2}$. 
			Since $|\F|>3$, Corollary \ref{thm: rank one basis outside}, applied to the two kernels, gives a basis $\{X_i\mid i\in [mn]\}$ of $\M_{m,n}(\F)$ consisting of rank-one matrices  such that 
			$\eta_{12}(X_i)\ne 0$ and $\delta_{21}(X_i)\ne 0$ for all $i\in [mn]$.  
			By Corollary \ref{thm: rank ineq rank 2}, there exist vectors $a_i, b_i\in\F^2$ such that
			\begin{equation}
				\wt{\phi}_{11}(X_i)=ua_i^\top +b_iv^\top ,\qquad i\in [mn]. 
			\end{equation}
			We have for $i\in [mn]$: 
			\begin{equation}
				\wt{\phi}(X_i)=\mtx{ua_i^\top +b_iv^\top  &u\eta_{12}(X_i)^\top  \\ \delta_{21}(X_i) v^\top  &0} = \mtx{b_i\\\delta_{21}(X_i)}\mtx{v\\0}^\top +\mtx{u\\0}\mtx{a_i\\\eta_{12}(X_i)}^\top  .
			\end{equation}
			Let $\wt{u}= \mtx{u\\0}$, $\wt{v}=\mtx{v\\0}$. Define the  linear maps  $\wt{g}:\M_{m,n}(\F)\to \F^{q}$ 
			and    $\wt{f}:\M_{m,n}(\F)\to \F^{p}$   such that for $i\in [mn]$:
			$$\wt{f}(X_i)= \mtx{b_i\\\delta_{21}(X_i)},\qquad \wt{g}(X_i)=\mtx{a_i\\\eta_{12}(X_i)}.$$
			Then $\wt{\phi}(X)=\wt{f}(X)\wt{v}^\top + \wt{u}\wt{g}(X)^\top  $  has the form in Theorem \ref{thm: M_mn rank 2 nonincreasing} (5). The same holds for
			$\phi$. 
			
			\item  [(b)]  
			Suppose that
			$$\wt{\phi}_{12}(X)=u\eta_{12}(X)^\top ,\qquad \wt{\phi}_{21}(X)=u' \eta_{21}(X)^\top , $$
			for nonzero  constant vectors $u\in\F^2$ and  $u'\in\F^{p-2}$  and nonzero linear maps $\eta_{12}:\M_{m,n}(\F)\to \F^{q-2}$  and   $\eta_{21}:\M_{m,n}(\F)\to \F^{2}$. If $\dim (\Ima \eta_{21})\le 1$ then we revert to  case (a). Now assume that  $\dim (\Ima \eta_{21})\ge 2$. 
			Since $|\F|>3$, Corollary \ref{thm: rank one basis outside}, applied to the two kernels, gives a basis $\{X_i\mid i\in [mn]\}$ of $\M_{m,n}(\F)$ consisting of rank-one matrices,  such that 
			$\eta_{12}(X_i)\ne 0$ and $\eta_{21}(X_i)\ne 0$ for all $i\in [mn]$.  
			By Corollary \ref{thm: rank ineq rank 2}, there exist vectors $a_i, b_i\in\F^2$ such that
			$\wt{\phi}_{11}(X_i)=ua_i^\top +b_i\eta_{21}(X_i)^\top$, so that
			\begin{equation}\label{wt-phi-Xi-1}
				\wt{\phi}(X_i)=\mtx{ua_i^\top +b_i\eta_{21}(X_i)^\top  &u\eta_{12}(X_i)^\top  \\ u' \eta_{21}(X_i)^\top  &0},\qquad
				i\in [mn].
			\end{equation}
			By assumption, $\wt{\phi}_{21}(X_i)\ne 0$ so that $\eta_{21}(X_i)\ne 0$ for all $i\in [mn]$.  
			Since $\dim (\Ima \eta_{21})\ge 2$, there exists $j\in [mn]$ such that
			$\eta_{21}(X_1)$ and $\eta_{21}(X_j)$ are noncollinear. 
			Since $\eta_{12}(X_i)\ne 0$ and $\eta_{21}(X_i)\ne 0$ for $i\in [mn]$, 
			we can choose $y\in\F\setminus\{0\}$ such that $\eta_{12}(X_1+yX_j)\ne 0$ and $\eta_{21}(X_1+yX_j)\ne 0$. 
			Only $y=0$ and at most two additional values are excluded, so this choice is possible under $|\F|\ge5$. Then $\rank (X_1+yX_j)\le 2$ so that $\rank\wt{\phi}(X_1+yX_j)\le 2$. By \eqref{wt-phi-Xi-1} and Corollary \ref{thm: rank ineq rank 2}, there exist vectors $a',b'\in\F^2$ such that
			\begin{eqnarray}
				&& ua_1^\top  +yua_j^\top +b_1\eta_{21}(X_1)^\top +yb_j\eta_{21}(X_j)^\top 
				\notag \\ &=& \wt{\phi}_{11}(X_1+yX_j) 
				= u{a'}^\top +b' \eta_{21}(X_1+yX_j)^\top .
			\end{eqnarray}
			Therefore,
			\begin{eqnarray}
				u(a'-a_1-ya_j)^\top &=& (b_1-b')\eta_{21}(X_1)^\top +y(b_j-b')\eta_{21}(X_j)^\top 
				\notag\\
				&=& \mtx{b_1-b' & b_j-b'} \mtx{\eta_{21}(X_1) & y\eta_{21}(X_j)}^\top.
			\end{eqnarray}
			The rank of the above matrix is no more than $1$. Since $\eta_{21}(X_1)$ and $y\eta_{21}(X_j)$ are noncollinear, both $b_1-b'$ and $b_j-b'$ are collinear to $u$.
			So $b_j-b_1=(b_j-b')-(b_1-b')=c_j u$ for some $c_j\in\F.$ 
			
			For any $k\in [mn]$, $\eta_{21}(X_k)$ is noncollinear to at least one of $\eta_{21}(X_1)$ and $\eta_{21}(X_j)$. 
			If $\eta_{21}(X_k)$ is noncollinear to  $\eta_{21}(X_1)$,  then similar arguments show  that $b_k-b_1=c_k u$ for some $c_k\in\F$.
			Otherwise, $\eta_{21}(X_k)$ is noncollinear to  $\eta_{21}(X_j)$, then $b_k-b_j$ is collinear to $u$, so that
			$b_k-b_1=(b_k-b_j)+(b_j-b_1)=c_k u$ for some $c_k\in\F$. Overall, write $b=b_1$, then 
			there exist $c_1=0,c_2,\ldots,c_{mn}\in\F$ such that
			\begin{equation}
				b_k=b+c_ku,\qquad k\in [mn].
			\end{equation}
			\eqref{wt-phi-Xi-1} becomes
			\begin{eqnarray}
				\wt{\phi}(X_k)&=&\mtx{u a_k^\top +(b+c_k u)\eta_{21}(X_k)^\top  &u\eta_{12}(X_k)^\top  \\ u' \eta_{21}(X_k)^\top  &0}
				\notag \\
				&=& \mtx{u &b\\0 &u'}\mtx{ a_k+ c_k \eta_{21}(X_k) & \eta_{21}(X_k)\\ \eta_{12}(X_k) & 0}^\top .
			\end{eqnarray} 
			Let $\wt{U}=\mtx{u &b\\0 &u'}\in \M_{p,2}(\F)$ and define a  linear map $\wt{g}:\M_{m,n}(\F)\to \M_{q,2}(\F)$  by 
			$$\wt{g}(X_k)=\mtx{ a_k+ c_k \eta_{21}(X_k) & \eta_{21}(X_k)\\ \eta_{12}(X_k) & 0},\qquad k\in [mn].$$
			Then $\wt{\phi}(X)= \wt{U} \wt{g}(X)^\top $ has the form in Theorem \ref{thm: M_mn rank 2 nonincreasing} (4). The same holds for
			$\phi$. 
			
			\item [(c)] Suppose that
			$$\wt{\phi}_{12}(X)= \delta_{12}(X) {v'}^\top ,\qquad \wt{\phi}_{21}(X)=\delta_{21}(X) v^\top ,$$
			for  nonzero  constant vectors $v'\in\F^{q-2}$ and $v\in\F^{2}$  and linear maps 
			$\delta_{12}:\M_{m,n}(\F)\to \F^{2}$
			and $\delta_{21}:\M_{m,n}(\F)\to \F^{p-2}$. Then the linear map $X\mapsto \wt{\phi}(X)^\top $ meets the criteria of case (b), so that it has the form in Theorem \ref{thm: M_mn rank 2 nonincreasing} (4). Hence both $\wt{\phi}$ and $\phi$ have the form in Theorem \ref{thm: M_mn rank 2 nonincreasing} (3).
			
			\item[(d)] Suppose that
			\[
			\wt{\phi}_{12}(X)=\delta(X){v'}^\top,
			\qquad
			\wt{\phi}_{21}(X)=u'\eta(X)^\top,
			\]
			where $v'\in\F^{q-2}$ and $u'\in\F^{p-2}$ are nonzero, and $\delta,\eta:\M_{m,n}(\F)\to\F^2$ are linear. If either coordinate pair is dependent, this case reduces to (a), (b), or (c). We therefore assume both pairs are independent. Put $G=\wt{\phi}_{11}$ and $J=\begin{pmatrix}0&-1\\1&0\end{pmatrix}$. Lemma~\ref{lem:cubic-obstruction} gives
			\[
			\delta(X)^\top JG(X)J\eta(X)=0
			\qquad(X\in\M_{m,n}(\F)).
			\]
			Lemma~\ref{lem:binary-normal-form} now distinguishes two possibilities.
			\begin{enumerate}
				\item[(i)] If $\span\{\delta_1,\delta_2\}\ne\span\{\eta_1,\eta_2\}$, then $G=\delta a^\top+b\eta^\top$ for constant vectors $a,b\in\F^2$. Hence
				\[
				\wt{\phi}(X)
				=\mtx{\delta(X)\\0}\mtx{a\\v'}^\top
				+\mtx{b\\u'}\mtx{\eta(X)\\0}^\top,
				\]
				which is form (5).
				
				\item[(ii)] If the spans agree, write $\eta=S\delta$ with $S\in\GL_2(\F)$. Lemma~\ref{lem:binary-normal-form} gives
				\[
				G(X)=\delta(X)a^\top+b\eta(X)^\top
				+\lambda(X)S^{-1}J
				\]
				for constant vectors $a,b$ and a linear functional $\lambda$. Multiplying $\wt{\phi}(X)$ on the right by $\diag(S^{-\top},I_{q-2})$, and relabeling $S^{-1}a$ as $a$ and the nonzero scalar multiple of $\lambda$ as $\lambda$, gives
				\[
				\Phi(X)=
				\begin{pmatrix}
					\delta(X)a^\top+b\delta(X)^\top+\lambda(X)J
					&\delta(X){v'}^\top\\
					u'\delta(X)^\top&0
				\end{pmatrix}.
				\]
				Choose $L\in\M_{2,p-2}(\F)$ and $R\in\M_{2,q-2}(\F)$ with $Lu'=b$ and $Rv'=a$. Choose $T_p\in\GL_{p-2}(\F)$ and $T_q\in\GL_{q-2}(\F)$ such that $T_pu'=e_1^{(p-2)}$ and $T_qv'=-e_1^{(q-2)}$. Then
				\[
				\begin{pmatrix}I_2&-L\\0&T_p\end{pmatrix}
				\Phi(X)
				\begin{pmatrix}I_2&0\\-R^\top&T_q^\top\end{pmatrix}
				=
				\mtx{\gamma(X)&0\\0&0}_{p\times q},
				\]
				where
				\[
				\gamma(X)=
				\begin{pmatrix}
					0&-\lambda(X)&-\delta_1(X)\\
					\lambda(X)&0&-\delta_2(X)\\
					\delta_1(X)&\delta_2(X)&0
				\end{pmatrix}\in\Alt_3(\F).
				\]
				All the matrices used in these transformations are invertible. Thus $\phi$ has form (6).
				\qedhere
			\end{enumerate}
			
		\end{enumerate}
	\end{proof}
	
	\subsection{Proof of the classification of rank-two preservers}
	
	\begin{proof}[Proof of Corollary \ref{thm: Mmn rank 2 preserver}.] 
		Since $|\F|\ge5>2+2$, Theorem~\ref{thm: preserving to decreasing} shows that $\phi$ is rank-two nonincreasing. Thus Theorem~\ref{thm: M_mn rank 2 nonincreasing} applies.
		
		We determine which of the six forms preserve rank
		two. For forms (1) and (2), the two constant factors must have full
		column and full row rank, respectively; otherwise a rank-two matrix
		can be chosen with a nonzero column or row vector in the relevant
		kernel. For form (3),
		\[
		\rank\bigl(\delta(A)V^\top\bigr)=2
		\]
		for every rank-two $A$ exactly when $\rank V=2$ and
		$\rank\delta(A)=2$ for every such $A$. The transpose argument gives
		the condition in form (4). In form (5), the sum
		$u\rho(A)^\top+\tau(A)v^\top$ has rank two exactly when
		$\tau(A)\notin\F u$ and $\rho(A)\notin\F v$. 
		
		For form (6),
		$\rank\phi(A)=\rank\gamma(A)$, and every nonzero $3\times3$
		alternating matrix in the displayed form has rank two. Hence rank-two preservation is equivalent to $\ker\gamma$ containing no rank-two matrix. Note that
		\begin{equation}\label{codim ker gamma}
			mn-\dim\ker\gamma=\dim\Ima\gamma\le 3.
		\end{equation}
		By \cite[Corollary 4]{PAZZIS2011}, if $\max\{m,n\}\ge 4,$ $\ker\gamma$ always contains a rank-two matrix; thus $\gamma$ cannot be a rank-two preserver. 
		Moreover,  $\min\{m,n\}\ge 2$ by assumption. It remains to consider the smaller source spaces.
		\begin{itemize}
			\item $(m,n)=(2,2).$ 
			Since $\dim \ker \gamma \ge 4 - 3 = 1$, $\ker \gamma$ is a nonzero subspace of upper rank $1$.  By \cite[Lemma~2]{AtkinsonLloyd1981}, every matrix space of upper
			rank at most $1$ is equivalent to a subspace supported on one row
			or one column. In particular, its dimension is at most $\max\{m,n\}$.
			We have $\dim\ker\gamma\le 2$, and $\ker\gamma$ is exactly one of the three types in the case 6a). 
			\item $(m,n)=(3,2).$ Again, $\ker\gamma$ is a subspace of upper rank one. Moreover, \eqref{codim ker gamma} shows that $\dim\ker\gamma\ge 3$.  So \cite[Lemma~2]{AtkinsonLloyd1981} implies that there is a nonzero vector $w\in\F^2$ such that
			\[
			\ker\gamma =\{uw^\top\mid u\in\F^3\}.
			\]
			Let $v\in\F^2$ be a nonzero vector such that $w^\top v=0.$ 
			The surjective linear map $A\mapsto Av$ has the same kernel as $\gamma$. 
			There is a linear isomorphism 
			\[\varphi: \{Av\mid A\in\M_{3,2}(\F)\}\to \Alt_3(\F)\] such that  
			$\gamma(A)=\varphi(Av).$
			Hence we get the form in 6b) for $(m,n)=(3,2).$
			
			\item $(m,n)=(2,3).$ The transpose argument gives the form in 6b) for $(m,n)=(2,3).$
			\item $(m,n)=(3,3).$ By \eqref{codim ker gamma}, $\dim\ker\gamma\ge6$.
			Let
			\[V:=\{A\in\ker\gamma\mid\det(A)=0\}.\]
			Every matrix in $V$ has rank at most one. Moreover, if $A,B\in V$, then $A+B\in\ker\gamma$ has rank at most two; since the kernel contains no rank-two matrix, $A+B\in V$. Thus $V$ is a subspace of upper rank one, and $\dim V\le3$ by \cite[Lemma~2]{AtkinsonLloyd1981}.
			Choose a nonzero $A\in V$ and a vector $v\in\F^3$ with $Av\ne0$. Such an $A$ exists: otherwise every nonzero element of $\ker\gamma$ would be invertible, and evaluation at any nonzero $v$ would inject the space of dimension at least six into $\F^3$. Now the evaluation map
			\[
			\ker\gamma\longrightarrow\F^3,\qquad X\longmapsto Xv
			\]
			has a kernel of dimension at least three. Every matrix in that kernel is singular and hence belongs to $V$. But the restriction of evaluation to $V$ is nonzero because $Av\ne0$, so its kernel has dimension at most two. This contradiction rules out $(m,n)=(3,3)$   in the stated class.
			\qedhere 
			
		\end{itemize}
	\end{proof}

	\section*{Acknowledgments}
	Ming-Cheng Tsai is supported by Taiwan NSTC grant 115-2115-M-027-001-MY2.
	
	\section*{Declaration of generative AI use}
	The authors used
	ChatGPT (OpenAI; model: GPT-6.1 Sol) to assist with language
	editing, manuscript organization, improving the exposition of portions of proofs, and the identification of  references.
	The authors independently reviewed and verified the mathematical
	arguments and cited references, revised the AI-assisted material,
	and confirmed the originality and accuracy of the final content.
	The authors have reviewed the tool's applicable terms of use
	and confirmed the suitability of its use for publication.
	The authors take full responsibility for the integrity and
	content of the manuscript.


\end{document}